\newtheorem{theorem}{Theorem}[section]
\newtheorem{proposition}[theorem]{Proposition}
\newtheorem{lemma}[theorem]{Lemma}
\theoremstyle{definition}
\newtheorem{definition}[theorem]{Definition}
\newtheorem{example}[theorem]{Example}
\theoremstyle{remark}
\newtheorem{remark}[theorem]{Remark}
\numberwithin{equation}{section}
\newcommand{\clm}{\mathcal{M}}
\newcommand{\clh}{\mathcal{H}}
\newcommand{\cld}{\mathcal{D}}
\newcommand{\clk}{\mathcal{K}}
\newcommand{\clb}{\mathcal{B}}
\newcommand{\clf}{\mathcal{F}}
\begin{document}

\begin{center}
{ 
       {\Large \textbf { \sc  Factorization of Characteristic Functions of Iterated Liftings                                  }
       }}
\end{center}

\vspace{1cm}

\begin{center} 
Neeru Bala\\
Stat-Math Unit,\\ 
Indian Statistical Institute,\\ 
R. V. College Post, Bangalore- 560059, India\\
Email address:  neeru$\_$vs@isibang.ac.in

\vspace{.5cm}
Santanu Dey\\
 (corresponding author)\\
 Department of Mathematics,\\
  Indian Institute of Technology Bombay,\\
   Powai, Mumbai-400076, India\\
Email address: santanudey@iitb.ac.in

\vspace{.5cm}
M. N. Reshmi\\
 Department of Mathematics,\\ 
 Indian Institute of Technology Bombay,\\
  Powai, Mumbai-400076, India\\
Email address: reshmi@math.iitb.ac.in
\end{center}

\vspace{1cm}

\vspace{0.5cm} {\bf Abstract:}  {We obtain a factorization of the characteristic function of a contractive two-step iterated lifting in terms of the characteristic functions of constituent liftings of the iterated lifting and the Julia-Halmos matrix. We also give an expression for the characteristic function of the minimal part of a contractive two-step iterated lifting as a restriction of the product of the characteristic functions of constituent liftings of the iterated lifting.}

\vspace{0.5cm}
\noindent {\bf MSC:2020}   47A20, 47A13, 47A15, 47A68\\
\noindent {\bf keywords:} row contractions, contractive lifting, Fock space, characteristic functions, multi-analytic operators, completely non-coisometric

\vspace{0.5cm}

	\section{Introduction}
	The aim of this article is to study lifting of row contractions on a Hilbert space. The foundation for the study of contractions on Hilbert spaces using dilation theory was laid down by  Sz-Nagy, C. Foias, Sarason, etc. (\cite{NF1}).  Let $\{S_i\}_{i=1}^d$ be a set of bounded linear operators on a Hilbert space $\mathcal{H}$. Then $\underline{S}=(S_1,S_2,\ldots,S_d)$ is called a {\it row contraction}, if $\underset{i=1}{\overset{d}{\sum}}S_iS_i^*\leq I_{\mathcal{H}}$. The work by Sz-Nagy and C. Foias \cite{NF1} is extended to different setups, for example, to row contractions by G. Popescu \cite{GP2,GP1,POP,POP2,POP3}, to commutative row contraction by W. Arveson \cite{ARV} and to a pair of commuting contractions by Ando \cite{{AND}}. 
	
	To every contraction on a Hilbert space, Sz-Nagy and C. Foias \cite{NF1} adjoined an operator valued analytic function, called a characteristic function, which is a completely invariant upto unitary equivalence for the contractions. G. Popescu \cite{GP2}  introduced characteristic function for (non commutative) row contractions and extended Sz-Nagy-Foias theory of characteristic function. Characteristic functions of commuting row contractions are studied in  \cite{BES,BT} and we refer to references therein for more details. Later, S. Dey and R. Gohm (\cite{DEY,DGH15,DEY1}) introduced characteristic functions for contractive liftings and established that it is a unitary invariant for a large class of liftings.  Popescu's theory of characteristic function of row contractions was shown as a special case of the theory of characteristic function of lifting in \cite{DGH15}. In this article, we study factorizations of the characteristic function of contractive liftings defined in \cite{DEY}. 
	
	In this article, one of the key element is Julia-Halmos matrix. Julia-Halmos matrix  corresponding to a bounded linear operator $L$ on a Hilbert space $\mathcal{H}$ is a unitary matrix of the form 
	\[
	J_L = \begin{pmatrix}
		D_{*,L} & L \\
		-L^{*} & D_L
	\end{pmatrix}
	\] where $D_L$ and $D_{*,L}$ denote the defect operators of $L.$  Julia-Halmos matrix plays a significant role in dilation theory and we refer to \cite{TIM} for more details about the matrix.
	
There are many factorizations of characteristic function available in the literature. Among them the most significant one is inner-outer factorization \cite{NF1}. A considerable amount of research work has been published on inner-outer factorization and some of the references for this are \cite{NF1,GP2}. A more general factorization of a contractive analytic function is the  regular factorization (\cite[Page 297]{NF1}).  The inner-outer factorization is also a regular factorization. Moreover, regular factorization is connected to another very interesting area in operator theory, that is, invariant subspaces of the contraction. Corresponding to every non-trivial invariant subspace of  a contraction, there is  a non-trivial regular factorization of the characteristic function. On other hand, if a given non-trivial factorization of the characteristic function is regular, then there exist a non-trivial invariant subspace for the contraction. G. Popescu \cite{GP1} defined regular factorization for row contraction and established a one-one correspondence between joint invariant subspace and regular factorization.

 Besides regular factorization, Sz-Nagy and C. Foias \cite{NF} also demonstrated a factorization technique using Julia-Halmos matrix, where regularity condition of the characteristic function is not required. As in regular factorization, the factorization  in \cite{NF} for the characteristic function of the contraction \[
	 	T = \begin{pmatrix}
	 		T_1 & X\\
	 		0 &T_2
	 	\end{pmatrix}
	 	\]
is achieved in terms of the characteristic function of operators $T_1$ and $T_2.$ However the proof is more direct and does not use functional model.  K. J. Haria et. al. \cite{MAJI} extended this notion to characteristic functions of non commutative row contractions. In this article, we prove a similar factorization for characteristic functions of contractive two-step iterated liftings.

	 In section $2$ of this article, we introduce notations and recall results related to row contractions which are used in this article. In section $3$, we give a Nagy-Foias type matrix factorization of characteristic functions for minimal contractive iterated liftings based on Julia-Halmos matrix. Let $\underline{E'}$ be a contractive iterated lifting consisting of two minimal liftings and let $\underline{\tilde{E}}$ be the minimal part of the iterated lifting.  In section $4$, we prove that the characteristic function of the minimal part $\underline{\tilde{E}}$ of the iterated lifting $\underline{E'}$ is the restriction to the defect space of $\underline{\tilde{E}}$ of product of characteristic functions of these minimal liftings.

	\section{Preliminaries}
	For $d\geq 2$, the full Fock space over $\mathbb{C}^d$ is defined by
	$$\Gamma(\mathbb{C}^d):=\mathbb{C}\oplus\mathbb{C}^d\oplus(\mathbb{C}^d)^{\otimes2}\oplus\cdots (\mathbb{C}^d)^{\otimes m}\cdots.$$
	In this article, we denote $\Gamma(\mathbb{C}^d)$ by $\Gamma$. The standard orthonormal basis of $\mathbb{C}^d$ is denoted by $\{e_1,e_2,\ldots, e_d\}$ and $e_0:=1\oplus 0\oplus\cdots$ is the vacuum vector. Let $\Lambda$ be the set $\{1,2,\ldots, d\}$ and $\tilde{\Lambda}=\underset{n=0}{\overset{\infty}{\bigcup}}\Lambda^n$, where $\Lambda^0:=\{0\}$. For $\alpha=(\alpha_1,\alpha_2,\ldots,\alpha_n)\in \Lambda^n$, we define  $|\alpha|:=\alpha_1+\alpha_2+\cdots+\alpha_n$ and  $e_{\alpha}:=e_{\alpha_1}\otimes e_{\alpha_2}\otimes\cdots\otimes e_{\alpha_n}.$  Note that the set $\{e_{\alpha}:\alpha\in\tilde{\Lambda}\}$ forms an orthonormal basis for the full Fock space $\Gamma$. The operators $L_i$ on $\Gamma,$ defined by $L_i h= e_i \otimes h$ for $i=1, \ldots, d$ and $h\in \Gamma,$ are called the (left) creation operators.
	
Let $\underline{S}=(S_1,S_2,\ldots,S_d)$ be a row contraction on  $\mathcal{H}_S$. Then the {\it defect operators }and {\it defect spaces }are defined by
\begin{align*}
 	D_S=\left(\delta_{ij}I-S_i^*S_j\right)^{1/2},\,\,\,\,&D_{*,S}=\left(I-\underset{i=1}{\overset{d}{\sum}}S_iS_i^*\right)^{1/2}\text{ and }\\
 	\mathcal{D}_S=\overline{\text{range }D_S},\,\,\,\,&\mathcal{D}_{*,S}=\overline{\text{range }D_{*,S}},
 \end{align*}respectively. For $\alpha=(\alpha_1,\alpha_2,\ldots,\alpha_n)\in \Lambda^n$,  we set $S_{\alpha}:=S_{\alpha_1}S_{\alpha_2}\ldots S_{\alpha_n}$.
For a row contraction $\underline{S}$ on $\mathcal{H}_S$, Popescu, in \cite{POP ML}, obtained a realization of the minimal isometric dilation (mid in short) $\underline{V}^S$ of $\underline{S}$ on $\hat{\clh}_S=\clh_S\oplus (
\Gamma\otimes \mathcal{D}_S)$ as
\begin{equation*}
	V_j^S\left( h\oplus\underset{\alpha\in\tilde{\Lambda}}{\sum}e_{\alpha}\otimes d_{\alpha} \right)= S_jh\oplus\left(e_0\otimes(D_S)_jh\oplus e_j\otimes\underset{\alpha\in\tilde{\Lambda}}{\sum}e_{\alpha}\otimes d_{\alpha}\right),
\end{equation*} 
for $h\in \clh_S,\,d_{\alpha}\in\cld_S$, where $(\cld_S)_jh=\cld_S(0,\ldots,h,\ldots,0).$

Let $\underline{C}=(C_1,C_2,\ldots,C_d)$ be a row contraction on $\mathcal{H}_C$. Then $\underline{E}$ is called a {\it lifting} of $\underline{C}$ by $\underline{A}$ on $\mathcal{H}_E=\mathcal{H}_C\oplus \mathcal{H}_A$, if 
	\[E_i=\begin{bmatrix}
		C_i&0\\
		B_i&A_i
	\end{bmatrix}\text{ for }i=1,2,\ldots,d.\]
Throughout this article, we work with contractive liftings which are characterized by the following result.
	\begin{proposition}\cite[Proposition 3.1]{DEY}\label{Prop lifting}
		Let $\underline{E}=(E_1,E_2,\ldots,E_d)$, where  $E_i$ be a bounded linear operator on $\mathcal{H}_E=\mathcal{H}_C\oplus \mathcal{H}_A$ with block matrix representation
		\[E_i=\begin{bmatrix}
			C_i&0\\
			B_i&A_i
		\end{bmatrix}\text{ for }i=1,2,\ldots,d.\] Then $\underline{E}$ is a row contraction if and only if $\underline{C}$ and $\underline{A}$ are row contractions and there exists a contraction $\gamma:\mathcal{D}_{*,A}\rightarrow \mathcal{D}_C$ such that $\underline{B}^*=D_C\gamma D_{*,A}.$
	\end{proposition}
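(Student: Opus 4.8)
The plan is to reduce the row-contraction condition on $\underline E$ to a single norm inequality and then peel off the two defect operators one at a time using Douglas' range-inclusion (factorization) lemma.

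First I would package the data into row operators: write $C,A,B$ for $[C_1\ \cdots\ C_d]$, $[A_1\ \cdots\ A_d]$, $[B_1\ \cdots\ B_d]$, regarded as maps $\mathcal H_C^{(d)}\to\mathcal H_C$, $\mathcal H_A^{(d)}\to\mathcal H_A$, $\mathcal H_C^{(d)}\to\mathcal H_A$, where $\mathcal H^{(d)}:=\bigoplus_{i=1}^d\mathcal H$. With respect to $\mathcal H_E^{(d)}=\mathcal H_C^{(d)}\oplus\mathcal H_A^{(d)}$ and $\mathcal H_E=\mathcal H_C\oplus\mathcal H_A$, the row operator $E=[E_1\ \cdots\ E_d]$ takes the block form $E=\bigl[\begin{smallmatrix} C & 0\\ B & A\end{smallmatrix}\bigr]$, so that $\underline E$ is a row contraction if and only if $\|E\|\le1$, equivalently $E^{*}E\le I$. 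Expanding the quadratic form, this is equivalent to
\[
\|C\underline x\|^2+\|B\underline x+A\underline y\|^2\le\|\underline x\|^2+\|\underline y\|^2\qquad\text{for all }\underline x\in\mathcal H_C^{(d)},\ \underline y\in\mathcal H_A^{(d)};
\]
I would also record the identities $I-C^{*}C=D_C^{2}$ on $\mathcal H_C^{(d)}$ and $I-AA^{*}=D_{*,A}^{2}$ on $\mathcal H_A$.

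For the ``only if'' direction, putting $\underline x=0$ shows $\underline A$ is a row contraction, and putting $\underline y=0$ gives $\|C\underline x\|^2+\|B\underline x\|^2\le\|\underline x\|^2$, which shows $\underline C$ is a row contraction and rearranges to $\|B\underline x\|\le\|D_C\underline x\|$ for all $\underline x$. Douglas' lemma then produces a contraction $\phi\colon\mathcal D_C\to\mathcal H_A$ with $B=\phi D_C$. Substituting this back and writing $z=D_C\underline x$ (which ranges over a dense subset of $\mathcal D_C$), the displayed inequality becomes $\|\phi z+A\underline y\|^2\le\|z\|^2+\|\underline y\|^2$, and by continuity this holds for all $z\in\mathcal D_C$; hence $[\phi\ A]\colon\mathcal D_C\oplus\mathcal H_A^{(d)}\to\mathcal H_A$ is a contraction, so $\phi\phi^{*}\le I-AA^{*}=D_{*,A}^{2}$, i.e. $\|\phi^{*}w\|\le\|D_{*,A}w\|$ for all $w\in\mathcal H_A$. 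A second application of Douglas' lemma yields a contraction $\gamma\colon\mathcal D_{*,A}\to\mathcal D_C$ with $\phi^{*}=\gamma D_{*,A}$, whence $\underline B^{*}=D_C\phi^{*}=D_C\gamma D_{*,A}$. For the ``if'' direction I would run this backwards: given $\underline B^{*}=D_C\gamma D_{*,A}$ with $\gamma$ a contraction, set $\phi:=D_{*,A}\gamma^{*}$, note that $B=\phi D_C$ and $\phi\phi^{*}=D_{*,A}\gamma^{*}\gamma D_{*,A}\le D_{*,A}^{2}=I-AA^{*}$, conclude that $[\phi\ A]$ is a contraction, and then substitute $z=D_C\underline x$ (using $\|D_C\underline x\|^{2}=\|\underline x\|^{2}-\|C\underline x\|^{2}$) to recover the displayed inequality, i.e. $E^{*}E\le I$.

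The conceptual content is the Parrott/Arveson-type fact that contractivity of a lower-triangular operator matrix is governed by the defect operators of its diagonal blocks, so I expect no real surprises; the step needing the most care is the bookkeeping in the two successive Douglas factorizations --- tracking on which space each of $D_C$, $D_{*,A}$, $\phi$, $\gamma$ lives, and justifying the density/continuity passage from $\text{range}\,D_C$ to $\mathcal D_C$. One could instead appeal to the block-positivity criterion that a positive $2\times2$ operator matrix with diagonal entries $P,R$ has its off-diagonal corner of the form $P^{1/2}GR^{1/2}$ with $\|G\|\le1$, applied to $I-E^{*}E$ or $I-EE^{*}$, but removing $D_C$ first and $D_{*,A}$ second is what produces the stated normal form $\underline B^{*}=D_C\gamma D_{*,A}$ most directly.
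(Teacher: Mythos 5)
Your argument is correct: the reduction of the row-contraction condition to the quadratic inequality, the two successive applications of Douglas' lemma (first peeling off $D_C$ from $B$, then $D_{*,A}$ from $\phi^{*}$), and the reversal for the converse all check out, including the bookkeeping of which spaces $\phi$ and $\gamma$ act between. Note, however, that the paper offers no proof to compare against --- it imports this statement verbatim as \cite[Proposition 3.1]{DEY} --- and your Douglas-lemma route is essentially the standard argument given in that source (and is of the same flavour as the direct norm expansions the present paper carries out in Lemmas \ref{lemma isomtery} and \ref{lemma adjoint isometry}).
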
 

The contraction $\gamma:\cld_{*,A}\rightarrow \cld_C$ in Proposition \ref{Prop lifting} is called {\it resolving} \cite[Definition 3.2]{DEY}, if for all $h\in \mathcal{H}_A$ we have the following:\\ 
$$(\gamma D_{*,A}A_{\alpha}^*h=0 \mbox{~for all~} \alpha\in\tilde{\Lambda}) \implies   ( D_{*,A}A_{\alpha}^*h=0 \mbox{~for all~} \alpha\in{\tilde{\Lambda}}).$$
A row contraction $\underline{T}=(T_1,\ldots,d)$ on a Hilbert space $H$ is said to be {\it completely non-coisometric} (cf. \cite{GP2}) if 
\[  \{h\in H: \sum_{|\alpha|=n}  \|T^*_\alpha h\|^2 = \|h\|^2  \mbox{~for all~} n \in \mathbb{N} \}=\{0\}  \]

\begin{definition}\cite[Definition 3.4]{DEY}
A lifting $\underline{E}$ of $\underline{C}$ by $\underline{A}$ is called {\it reduced}, if $\underline{A}$ is completely non-coisometric and $\gamma$ is resolving. 
\end{definition}

	\begin{definition}
	A contractive lifting $\underline{E}$ of $\underline{C}$ is called minimal, if 
	\begin{align*}
		\clh_{E} = \overline{span}\{E_{\alpha}x: x\in\clh_C\;\;\text{for all }\,\alpha\in\tilde{\Lambda}\}.
	\end{align*}
\end{definition}
As noted  in the remark that follows \cite[Proposition 3.8]{DGH15} (also see \cite{DEY1}), reduced liftings are same as the {\it minimal contractive liftings}.  For more detail, we refer to \cite{DGH15}. 
Let $\underline{C}$ be a row contraction and $\underline{E}$ be a minimal lifting of $\underline{C}$ by $\underline{A}$ on $\mathcal{H}_E=\mathcal{H}_C\oplus\mathcal{H}_A$. 
Then the minimal isometric dilation $\underline{V}^E$ of $E$ is also an isometric dilation of $\underline{C}$, thus $\hat{\clh}_C$ can be embedded in $\hat{\clh}_E$ as a reducing subspace of $\underline{V}^E.$ We denote the orthogonal complement of the image of $\hat{\clh}_C$ in $\hat{\clh}_E$ by $\clk$ and denote the restriction of ${\underline{V}}^E$ on $\clk$ by $\underline{Y}$.  Then there exist a unitary operator $W:\hat{\clh}_{E}\rightarrow\hat{\clh}_C\oplus\clk$ satisfying

\begin{equation} \label{char uni}
W|_{{\clh_C}} = I_{\clh_C}\text{ and }W{V_i}^{E} = (V^{C}\oplus Y)_iW\; \text{for all}\; i = 1,2,..,d. 
\end{equation}
S. Dey and R. Gohm \cite{DEY} used the operator $W$ to define a characteristic function for minimal contractive lifting of a row contraction as follows.
\begin{definition}\label{def 2.4}
	The characteristic function of the minimal contractive lifting $\underline{E}$ of $\underline{C}$ is the  operator $M_{C,E}:\Gamma\otimes\cld_E\rightarrow\Gamma\otimes\cld_{C}$ defined by
	\[
	M_{C,E} = P_{\Gamma\otimes\cld_C}W|_{\Gamma\otimes\cld_E},
	\] 
	where $P_{\Gamma\otimes\cld_C}$ is the orthogonal projection onto $\Gamma\otimes\cld_C$.
\end{definition}
 The characteristic function $M_{C,E}$ is a multi-analytic operator (cf. \cite{POP}), i.e., $M_{C,E} (L_i \otimes I_{\cld_E})= (L_i \otimes I_{\cld_C}) M_{C,E}$ for $i=1,\ldots,d$ and therefore it is determined by its symbol $\theta_{C,E}:\cld_E\rightarrow\Gamma\otimes \cld_C$, where $\theta_{C,E}:=W|_{e_0\otimes \cld_E}$. We refer to \cite{DEY,DGH15} for detailed discussion about characteristic function of liftings. We frequently use the following representation of $\theta_{C,E}$, which is given in section $4$ of the article \cite{DEY}.\\
 For $ h\in \mathcal{H}_C$
\begin{align}\label{eqn dey characteristic funtion HC}
\theta_{C,E}(D_E)_ih=e_0\otimes[(D_C)_ih-\gamma D_{*,A}B_ih]-\underset{|\alpha|\geq1}{\sum}e_{\alpha}\otimes\gamma D_{*,A}A_{\alpha}^*B_ih,
\end{align}
and for $h\in \mathcal{H}_A$
\begin{align}\label{eqn dey characteristic function HA}
	\theta_{C,E}(D_E)_ih=&-e_0\otimes\gamma \underline{A}(D_A)_ih+\underset{j=1}{\overset{d}{\sum}}e_j\otimes\underset{\alpha}{\sum}e_{\alpha}\otimes\gamma D_{*,A}A_{\alpha}^*P_jD_A(D_A)_ih,
\end{align}
where $\gamma:\cld_{*,A}\rightarrow \cld_C$ is the contraction from Proposition \ref{Prop lifting}.

The unique formal Fourier expansion of $M_{C,E}$ is given by,
\begin{align*}
M_{C,E}&\sim\sum_{\alpha\in\tilde{\Lambda}}R_{\alpha}\otimes (\theta_{C,E})_{(\alpha)}
\end{align*}
where $R_i$ is the right creation operator defined on the Fock space and the ``coefficients" $(\theta_{C,E})_{(\alpha)}  \in \mathcal{B}(\cld_E,\cld_C)$ are given by
$$\langle (\theta_{C,E})_{(\alpha)}x,y\rangle = \langle\theta_{C,E} x, e_{\tilde{\alpha}}y\rangle$$ for $x\in \cld_E$ and for $y\in\cld_C$. The symbol $\tilde{\alpha}$ is the reverse of $\alpha,$ i.e., if $\alpha = \alpha_1\alpha_2\ldots\alpha_n$ then $\tilde{\alpha} = \alpha_n\alpha_{(n-1)}\ldots\alpha_1.$ \\
Let $x=D_Eh$ for $h\in \clh_C$. Then
\begin{align}\label{eqn of fourier coefficient for HC}
	(\theta_{C,E})_{(\alpha)}x =\begin{cases}
	D_Ch-\gamma D_{*A}\underline{B}h&\text{if } \alpha = 0\\
	-\gamma D_{*A}A_{\tilde{\alpha}}^*\underline{B}h&\text{if }\alpha\neq 0
	\end{cases}
\end{align}

Let $x=D_Eh$ for $h\in\clh_A$. Then $\theta_{C,E}x=(I\otimes\gamma)\theta_{A}x$. Hence,
\begin{align}\label{eqn of fourier coefficient for HA}
(\theta_{C,E})_{(\alpha)}x= \gamma{\theta_{A}}_{(\alpha)}x
\end{align}
By von Neumann inequality \cite{GP1} for non commutative operators, we have
\begin{align*}
	{M_{C,E}}&=\text{SOT}-\underset{r\rightarrow 1}{\lim}M_{C,E}(rR)\text{ for }r\in[0,1)\\
	M_{C,E}&(rR)=\sum_{\alpha\in\tilde{\Lambda}}r^{|\alpha|}R_{\alpha}\otimes\theta_{\alpha}
\end{align*}
By Equations (\ref{eqn of fourier coefficient for HC}) and (\ref{eqn of fourier coefficient for HA}), we obtain

\begin{align}
	{M_{C,E}(rR)(I_{\Gamma}\otimes D_E)|_{\Gamma\otimes\mathcal{H}_C}}=&I_{\Gamma}\otimes D_C-(I_{\Gamma}\otimes\gamma D_{*,A})\left(I_{\Gamma\otimes\mathcal{H}_E}-r\underset{i=1}{\overset{d}{\sum}}R_i\otimes A_i^*\right)^{-1}(I_{\Gamma}\otimes D_{*,A}\gamma^*
	 D_C)\\
	 {M_{C,E}(rR)(I_{\Gamma}\otimes D_E)|_{\Gamma\otimes\mathcal{H}_A}}=&-I_{\Gamma}\otimes\gamma \underline{A}D_A+(I_{\Gamma}\otimes \gamma D_{*,A})\left(I_{\Gamma\otimes\mathcal{H}_E}-r\underset{i=1}{\overset{d}{\sum}}R_i\otimes A_i^*\right)^{-1}
	 r(\underline{R}\otimes I_{\mathcal{H}_E})(I_{\Gamma}\otimes D_A^2)
\end{align}
For a row contraction $\underline{T} = (T_1,T_2,\ldots,T_n)$, we denote $(I_{\Gamma}\otimes T_1,\ldots,I_{\Gamma}\otimes T_n)$   by $\underline{T}_{\Gamma}.$ Also $I_{\Gamma}\otimes\gamma$  and $(R_1\otimes I_{H},\ldots,R_n\otimes I_{H})$ are denoted by $\gamma_{\Gamma}$ and $\underline{R}_{H}.$ Then
\begin{align}
\label{eqn characteristic for HC}
M_{C,E}(rR)D_{E_{\Gamma}}|_{\Gamma\otimes\mathcal{H}_C}=& D_{C_{\Gamma}}-\gamma_{\Gamma} D_{*,A_{\Gamma}}\left( I-r \underline{R}_H\underline{A}^*_{\Gamma}\right)^{-1}D_{*,A_{\Gamma}}\gamma^*_{\Gamma} D_{C_{\Gamma}},\\
\label{eqn characteristic for HA}	
{M_{C,E}(rR)D_{E_{\Gamma}}|_{\Gamma\otimes\mathcal{H}_A}}=&\left(-\gamma_{\Gamma} \underline{A}_{\Gamma}+\gamma_{\Gamma} D_{*,A_{\Gamma}}(I-r \underline{R}_H \underline{A}_{\Gamma}^*)^{-1}r \underline{R}_H D_{A_{\Gamma}}\right) D_{A_{\Gamma}}.
\end{align}

For a row contraction $\underline{A}$ on a Hilbert space $\mathcal{H}$, we denote the characteristic function of $\underline{A}$ (cf. \cite{GP2}) by {$M_{A}$. In \cite{GP1}, it is shown that  $M_{A}=\text{SOT}-\underset{r\rightarrow 1}{\lim}M_{A}(rR)$ where
	\[
	M_{A}(rR)=- \underline{A}_{\Gamma}+ D_{*,A_{\Gamma}}(I-r \underline{R}_H \underline{A}_{\Gamma}^*)^{-1}r \underline{R}_H D_{A_{\Gamma}},
	\]
	and its symbol  $\theta_{A}:\mathcal{D}_A\rightarrow\Gamma\otimes\mathcal{D}_{*,A}$ is defined by $\theta_{A}=M_A|_{e_0\otimes\mathcal{D}_A}.$ } Forthcoming result is used very frequently in the subsequent sections.
\begin{lemma}\cite[Lemma 2.1]{MAJI}\label{Lemma MAJI}
	For a row contraction $\underline{A}$ on $\mathcal{H}$ and for $r\in[0,1)$, we have 
	$$ D_{*,A_{\Gamma}}(I-r \underline{R}_H \underline{A}_{\Gamma}^*)^{-1} D_{*,A_{\Gamma}}=I_{ \cld_{*,A_{\Gamma}}}+M_{A}(rR) \underline{A}_{\Gamma}^*.$$
\end{lemma}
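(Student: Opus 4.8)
The plan is to prove the identity purely algebraically: substitute the closed form of $M_A(rR)$ and use two standard facts about the defect operators of the amplified row contraction $\underline{A}_\Gamma$ on $\Gamma\otimes\mathcal{H}$. Since $\underline{A}_\Gamma$, $D_{A_\Gamma}$, $D_{*,A_\Gamma}$ are nothing but $I_\Gamma\otimes\underline{A}$, $I_\Gamma\otimes D_A$, $I_\Gamma\otimes D_{*,A}$, it suffices to establish the facts for $\underline{A}$ itself and then amplify.

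First I would record the intertwining relation. From $D_{*,A}^2\,\underline{A}=(I-\underline{A}\,\underline{A}^{*})\underline{A}=\underline{A}(I-\underline{A}^{*}\underline{A})=\underline{A}\,D_A^2$ one gets $p(D_{*,A}^2)\underline{A}=\underline{A}\,p(D_A^2)$ for every polynomial $p$, hence, approximating $\sqrt{\;\cdot\;}$ uniformly on $[0,1]$ (which contains the spectra of $D_A^2$ and $D_{*,A}^2$), the relation $D_{*,A}\underline{A}=\underline{A}\,D_A$; taking adjoints and amplifying gives $D_{A_\Gamma}\underline{A}_\Gamma^{*}=\underline{A}_\Gamma^{*}D_{*,A_\Gamma}$. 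The second fact is simply $I-\underline{A}_\Gamma\underline{A}_\Gamma^{*}=D_{*,A_\Gamma}^2$. I would also note that for $r\in[0,1)$ the operator $I-r\,\underline{R}_H\underline{A}_\Gamma^{*}$ is boundedly invertible, since $\|\underline{R}_H\|\le 1$ and $\|\underline{A}_\Gamma^{*}\|\le 1$ force the Neumann series to converge; write $B:=r\,\underline{R}_H\underline{A}_\Gamma^{*}$ and $\Delta:=(I-B)^{-1}$.

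Now I substitute. From $M_A(rR)=-\underline{A}_\Gamma+D_{*,A_\Gamma}\,\Delta\, r\,\underline{R}_H D_{A_\Gamma}$,
\[
M_A(rR)\,\underline{A}_\Gamma^{*}=-\underline{A}_\Gamma\underline{A}_\Gamma^{*}+D_{*,A_\Gamma}\,\Delta\, r\,\underline{R}_H\,D_{A_\Gamma}\underline{A}_\Gamma^{*}.
\]
Replacing $D_{A_\Gamma}\underline{A}_\Gamma^{*}$ by $\underline{A}_\Gamma^{*}D_{*,A_\Gamma}$ turns the last term into $D_{*,A_\Gamma}\,\Delta\, B\, D_{*,A_\Gamma}$, and the elementary resolvent identity $\Delta B=(I-B)^{-1}-I=\Delta-I$ rewrites this as $D_{*,A_\Gamma}\Delta D_{*,A_\Gamma}-D_{*,A_\Gamma}^2$. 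Since $-\underline{A}_\Gamma\underline{A}_\Gamma^{*}-D_{*,A_\Gamma}^2=-(I-\underline{A}_\Gamma\underline{A}_\Gamma^{*})-\underline{A}_\Gamma\underline{A}_\Gamma^{*}=-I$, everything collapses to
\[
M_A(rR)\,\underline{A}_\Gamma^{*}=D_{*,A_\Gamma}\,\Delta\,D_{*,A_\Gamma}-I,
\]
which is the asserted identity after transposing $I$. Reading both sides as operators on $\cld_{*,A_\Gamma}$ is legitimate: $D_{*,A_\Gamma}\Delta D_{*,A_\Gamma}$ maps into $\cld_{*,A_\Gamma}$ by definition of the defect space, and $M_A(rR)\underline{A}_\Gamma^{*}$ maps $\cld_{*,A_\Gamma}$ into itself precisely because it equals $D_{*,A_\Gamma}\Delta D_{*,A_\Gamma}-I$.

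I do not expect a genuine obstacle here — the statement is an algebraic identity whose proof is a few lines once the intertwining relation $D_{A_\Gamma}\underline{A}_\Gamma^{*}=\underline{A}_\Gamma^{*}D_{*,A_\Gamma}$ and the trick $\Delta B=\Delta-I$ are in hand. The only point needing a moment's care is the bookkeeping of the identity operator: on $\ker D_{*,A_\Gamma}$ both sides applied to a vector $v$ return $-v$, so the full-space version with $I_{\Gamma\otimes\mathcal{H}}$ is equally valid, and restricting to $\cld_{*,A_\Gamma}$ yields the form stated in the lemma.
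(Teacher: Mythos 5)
Your argument is correct: the intertwining relation $D_{A_\Gamma}\underline{A}_\Gamma^{*}=\underline{A}_\Gamma^{*}D_{*,A_\Gamma}$ (obtained from $D_{*,A}^2\underline{A}=\underline{A}D_A^2$ by polynomial approximation of the square root), the resolvent identity $\Delta B=\Delta-I$, and $I-\underline{A}_\Gamma\underline{A}_\Gamma^{*}=D_{*,A_\Gamma}^2$ are exactly what is needed, and the collapse to $D_{*,A_\Gamma}\Delta D_{*,A_\Gamma}-I$ is right. The paper itself gives no proof of this lemma --- it is quoted verbatim from \cite[Lemma 2.1]{MAJI} --- and the computation there is essentially the one you carried out, so there is no divergence of method to report; your closing remark about where the identity operator lives (the full-space identity restricts to $\cld_{*,A_\Gamma}$ because $\underline{A}_\Gamma^{*}$ maps $\cld_{*,A_\Gamma}$ into $\cld_{A_\Gamma}$ by the same intertwining) is the only point that genuinely needed saying, and you said it.
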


\section{Factorization of an iterated lifting}
In this section, our aim is to factorize the characteristic function of an iterative lifting. If we start with a row contraction $\underline{C}$ and  consider a contractive lifting $\underline{E}$ of $\underline{C}$ by $\underline{A}$ and $\underline{E}'$ is a contractive lifting of $\underline{E}$ by $\underline{A}'$, then we want to factorize $M_{C,E'}$ in terms of $M_{A}$ and $M_{A'}$.

We start with a result, which gives a relation between the defect spaces of contractions appearing in a lifting.
\begin{lemma}\label{lemma isomtery}
	Let $\underline{C}=[C_1,\ldots,C_d]$ and $\underline{A}=[A_1,\ldots,A_d]$ be two row contractions on Hilbert spaces $\mathcal{H}_C$ and $\mathcal{H}_A$, respectively. Suppose $\delta\in\mathcal{B}(\cld_{*,A},\cld_C)$ is a contraction and $\underline{E}$ is a contractive lifting of $\underline{C}$ by $\underline{A}$ on $\mathcal{H}_E=\mathcal{H}_C\oplus \mathcal{H}_A$ such that
	\[E_i=\begin{bmatrix}
		C_i&0\\
		 (D_{*,A}\delta^* D_C)_i&A_i
	\end{bmatrix}\text{ for }i=1,2,\ldots d.\]
Then there exist {a unitary} $\sigma_E:\cld_E\rightarrow \cld_{*,\delta}\oplus \cld_A$ such that 
\[\sigma_E D_E=\begin{bmatrix}
	 D_{*,\delta} D_C&0\\
	-A^*\delta^* D_C& D_A
\end{bmatrix}.\]
\end{lemma}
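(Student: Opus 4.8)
The goal is to produce a unitary $\sigma_E:\mathcal D_E\to\mathcal D_{*,\delta}\oplus\mathcal D_A$ intertwining $D_E$ with the claimed $2\times 2$ operator matrix. The natural strategy is the standard defect-space bookkeeping trick: compute $D_E^2 = I_{\mathcal H_E}-\sum_i E_i^*E_i$ explicitly as an operator matrix on $\mathcal H_C\oplus\mathcal H_A$, and separately compute the matrix $\begin{bmatrix} D_{*,\delta}D_C & 0\\ -A^*\delta^*D_C & D_A\end{bmatrix}^*\begin{bmatrix} D_{*,\delta}D_C & 0\\ -A^*\delta^*D_C & D_A\end{bmatrix}$ as an operator matrix from $\mathcal H_C\oplus\mathcal H_A$ to itself. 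If these two agree, then the map sending $D_E(h\oplus k)$ to $\bigl(D_{*,\delta}D_C h\bigr)\oplus\bigl(-A^*\delta^*D_C h + D_A k\bigr)$ is a well-defined isometry from $\overline{\operatorname{range}}\,D_E=\mathcal D_E$ into $\mathcal D_{*,\delta}\oplus\mathcal D_A$; one then checks surjectivity to upgrade "isometry" to "unitary."

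First I would write $\underline B = $ (the first column below $\underline C$) $= D_{*,A}\delta^*D_C$ read rowwise, so $B_i = (D_{*,A}\delta^*D_C)_i$, and expand
\[
\sum_i E_i^*E_i = \begin{bmatrix} \sum_i C_i^*C_i + \sum_i B_i^*B_i & \sum_i B_i^*A_i\\ \sum_i A_i^*B_i & \sum_i A_i^*A_i\end{bmatrix}.
\]
The $(1,1)$ entry is $\sum_i C_i^*C_i + D_C\delta D_{*,A}^2\delta^*D_C = (I-D_C^2) + D_C\delta(I-\sum_i A_iA_i^*)\delta^*D_C$; the $(1,2)$ entry is $\sum_i B_i^*A_i = D_C\delta D_{*,A}\underline A$ where $\underline A D_A = D_{*,A}\underline A$ (the standard intertwining $A_i D_A = D_{*,A}A_i$ in row form), so it equals $D_C\delta(\underline A D_A)$... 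I would be careful here and instead use $D_{*,A}A_i = A_i D_A$ directly on each entry. The $(2,2)$ entry is $\sum_i A_i^*A_i = I - D_A^2$. Then $I_{\mathcal H_E}-\sum_i E_i^*E_i$ has $(1,1)$ entry $D_C^2 - D_C\delta D_{*,A}^2\delta^*D_C = D_C(I-\delta D_{*,A}^2\delta^*)D_C = D_C D_{*,\delta}^2 D_C$ (using $D_{*,\delta}^2 = I - \delta\delta^*$ on $\mathcal D_C$ side... again one must track which defect is which: $\delta:\mathcal D_{*,A}\to\mathcal D_C$, so $D_\delta^2 = I-\delta^*\delta$ on $\mathcal D_{*,A}$ and $D_{*,\delta}^2 = I-\delta\delta^*$ on $\mathcal D_C$, and $\delta D_{*,A}^2\delta^* = \delta\delta^*$ after absorbing $D_{*,A}$ into the range — this is the first subtle point), $(2,2)$ entry $D_A^2$, and $(1,2)$ entry $-D_C\delta D_{*,A}\underline A = -D_C\delta\,\underline A D_{*,A}\cdots$. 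On the other side, the product of the claimed matrix with its adjoint has $(1,1)$ entry $D_C D_{*,\delta}^2 D_C + D_C\delta\underline A\,\underline A^*\delta^*D_C$, which should also collapse to $D_C D_{*,\delta}^2 D_C$ only if... no — one uses $A^*\delta^*D_C$ meaning $\underline A^*\delta^*D_C$ as a column, so the $(1,1)$ entry of the product is $D_C D_{*,\delta}^2 D_C$ alone (the $-A^*\delta^*D_C$ piece contributes to row 2 only). This matching of the two Gram matrices, entry by entry, with correct handling of $D_{*,A}$-versus-$\mathcal D_{*,A}$ and the row/column reshaping of $\underline A$, is the computational heart.

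The main obstacle I anticipate is not any single identity but the combination: (i) keeping straight that $\delta$ and $\delta^*$ swap $\mathcal D_{*,A}\leftrightarrow\mathcal D_C$ and hence which of $D_\delta,D_{*,\delta}$ appears where, together with the absorption identity $\delta D_{*,A} = \delta$ when restricted appropriately (i.e. $D_{*,A}$ acts as identity on $\overline{\operatorname{range}} D_{*,A}=\mathcal D_{*,A}$, so $\delta D_{*,A}^2\delta^* = \delta\delta^*$ as operators on $\mathcal D_C$, since $\delta^*$ maps into $\mathcal D_{*,A}$); (ii) the reshaping between the row operator $\underline A=[A_1,\dots,A_d]:\mathcal H_A^{\,d}\to\mathcal H_A$ and its column adjoint, so that $\sum_i A_i^*(\cdots)A_i$ terms line up with $\underline A^*(\cdots)\underline A$; and (iii) the cross-term cancellation in the $(1,2)$/$(2,1)$ entries, which needs $D_{*,A}A_i = A_iD_A$ applied inside $\sum_i B_i^*A_i$. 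Once the two Gram matrices are shown equal, well-definedness and isometry of $\sigma_E$ on $\operatorname{range}D_E$ are automatic, extension to $\mathcal D_E$ by continuity is routine, and surjectivity follows because the range of the claimed matrix, together with the already-established isometry, must be dense in $\mathcal D_{*,\delta}\oplus\mathcal D_A$ — concretely, $D_{*,\delta}D_C$ has dense range in $\mathcal D_{*,\delta}$ after composing with the isometry... I would instead argue surjectivity directly: the second component $-A^*\delta^*D_C h + D_A k$ ranges over a dense subset of $\mathcal D_A$ (take $h=0$), and modulo that, the first component $D_{*,\delta}D_C h$ ranges densely over $\mathcal D_{*,\delta}$, so the range is dense; being also closed (isometric image of the closed space $\mathcal D_E$), it is everything. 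Hence $\sigma_E$ is unitary.
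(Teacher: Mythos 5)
Your overall strategy is the same as the paper's: the paper proves the lemma by expanding $\|D_Eh\|^2=\|h\|^2-\|\underline{E}h\|^2$ for $h=h_C\oplus h_A$ and showing it equals $\|D_{*,\delta}D_Ch_C\|^2+\|D_Ah_A-\underline{A}^*\delta^*D_Ch_C\|^2$, which is exactly the quadratic form of your Gram--matrix identity $M^*M=D_E^2$; well-definedness and isometry of $\sigma_E$ are then read off as you describe, and your density argument for surjectivity is actually more explicit than what the paper records. (A minor point: $D_E$ lives on the $d$-fold ampliation, so the blocks of $D_E^2$ are $I-\underline{C}^*\underline{C}-\underline{B}^*\underline{B}$, $-\underline{B}^*\underline{A}$, etc., rather than literal sums $\sum_i E_i^*E_i$; this does not affect the argument if the blocks are interpreted as row/column operators, as you partly do.)

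However, at the very point you flag as ``the first subtle point'' you make a genuine error. The operator $D_{*,A}$ is \emph{not} the identity on $\cld_{*,A}=\overline{\operatorname{range}}\,D_{*,A}$; it only maps that space into itself (for $d=1$ and $A=\tfrac12$ one has $D_{*,A}=\tfrac{\sqrt3}{2}$ on $\cld_{*,A}=\Comp$). Hence $\delta D_{*,A}^2\delta^*\neq\delta\delta^*$; the correct identity is $\delta D_{*,A}^2\delta^*=\delta\delta^*-\delta\,\underline{A}\,\underline{A}^*\delta^*$, so the $(1,1)$ block of $D_E^2$ is $D_CD_{*,\delta}^2D_C+D_C\delta\,\underline{A}\,\underline{A}^*\delta^*D_C$, not $D_CD_{*,\delta}^2D_C$. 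You then make a second, compensating error: the $(1,1)$ entry of $M^*M$ involves \emph{both} entries of the first column of $M$, so it equals $D_CD_{*,\delta}^2D_C+D_C\delta\,\underline{A}\,\underline{A}^*\delta^*D_C$ as well; your closing parenthetical claim that the $-\underline{A}^*\delta^*D_C$ entry ``contributes to row 2 only'' confuses $M^*M$ with $MM^*$. The two spurious simplifications happen to cancel, so your conclusion that the Gram matrices agree is correct, but neither of your stated values for the $(1,1)$ entry is, and carrying out only the first error would have produced an apparent mismatch. The correct mechanism is the intertwining $\|D_{*,A}x\|^2=\|x\|^2-\|\underline{A}^*x\|^2$ applied to $x=\delta^*D_Ch_C$, which is precisely the step the paper's norm computation performs.
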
	
\begin{proof}
	Let $h=h_C\oplus h_A\in \mathcal{H}_C\oplus \mathcal{H}_A$. Then 
	\begin{align*}
		\| D_Eh\|^2=&\|h\|^2-\|\underline{E}h\|^2\\
		=&\|h_C\|^2+\|h_A\|^2-\|\underline{C}h_C\|^2-\| D_{*,A}\delta^* D_Ch_C+\underline{A}h_A\|^2\\
		=&\| D_Ch_C\|^2+\|h_A\|^2-\| D_{*,A}\delta^* D_Ch_C\|^2-\|\underline{A}h_A\|^2-\langle D_{*,A}\delta^* D_Ch_C,\underline{A}h_A\rangle\\
		&-\langle \underline{A}h_A, D_{*,A}\delta^* D_Ch_C\rangle\\
		=&\| D_Ch_C\|^2+\| D_Ah_A\|^2-\|\delta^* D_Ch_C\|^2+\|\underline{A}^*\delta^* D_Ch_C\|^2-\langle\delta^* D_Ch_C, D_{*,A}\underline{A}h_A\rangle\\
		&-\langle D_{*,A}\underline{A}h_A,\delta^* D_Ch_C\rangle\\
		=&\| D_{*,\delta} D_Ch_C\|^2+\| D_Ah_A\|^2+\|\underline{A}^*\delta^* D_Ch_C\|^2-\langle\delta^* D_Ch_C,\underline{A} D_{A}h_A\rangle-\langle \underline{A} D_{A}h_A,\delta^* D_Ch_C\rangle\\
		=&\| D_{*,\delta} D_Ch_C\|^2+\| D_Ah_A\|^2+\|\underline{A}^*\delta^* D_Ch_C\|^2-\langle \underline{A}^*\delta^* D_Ch_C, D_{A}h_A\rangle-\langle  D_{A}h_A,\underline{A}^*\delta^* D_Ch_C\rangle\\
		=&\| D_{*,\delta} D_Ch_C\|+\| D_Ah_A-\underline{A}^*\delta^* D_Ch_C\|^2.
	\end{align*}
Thus \[\sigma_E D_E=\begin{bmatrix}
	 D_{*,\delta} D_C&0\\
	-\underline{A}^*\delta^* D_C& D_A
\end{bmatrix}\] is an isometry from $\cld_E$ onto $\cld_{*,\delta}\oplus \cld_A$.
\end{proof} 
Similar to the previous result, the following lemma gives another relation between the defect spaces  of contractions appearing in a lifting.
\begin{lemma}
	\label{lemma adjoint isometry}
	Let $\underline{C},\,\underline{A},\,\delta$ and $\underline{E}$ be as defined in Lemma \ref{lemma isomtery}. Then there exist {a unitary} $\sigma'_E:\cld_{*,E}\rightarrow \cld_{*,C}\oplus \cld_{\delta}$ such that 
	\begin{align}\label{adjoint isometry}
			\sigma '_E D_{*,E}=\begin{bmatrix}
			D_{*,C}&-\underline{C}\delta D_{*,A}\\
			0& D_{\delta} D_{*,A}
		\end{bmatrix}.
	\end{align}
\end{lemma}
\begin{proof}
	Let $h=h_C\oplus h_A\in \mathcal{H}_C\oplus \mathcal{H}_A$. Then 
\begin{align*}
	\| D_{*,E}h\|^2=&\|h\|^2-\|\underline{E}^*h\|^2\\
	=&\|h_C\|^2+\|h_A\|^2-\|\underline{C}^*h_C+ D_C\delta D_{*,A}h_A\|^2-\|\underline{A}^*h_A\|^2\\
	=&\|h_C\|^2+\| D_{*,A}h_A\|^2-\| D_C\delta D_{*,A}h_A\|^2-\|\underline{C}^*h_C\|^2-\langle D_C\delta D_{*,A}h_A,\underline{C}^*h_C\rangle\\
	&-\langle \underline{C}^*h_C, D_C\delta D_{*,A}h_A\rangle\\
	=&\| D_{*,C}h_C\|^2+\| D_{*,A}h_A\|^2-\|\delta D_{*,A}h_A\|^2+\|\underline{C}\delta D_{*,A}h_A\|^2-\langle\delta D_{*,A}h_A, D_C\underline{C}^*h_C\rangle\\
	&-\langle D_C\underline{C}^*h_C,\delta D_{*,A}h_A\rangle\\
=&\| D_{*,C}h_C\|^2+\| D_{\delta} D_{*,A}h_A\|^2+\|\underline{C}\delta D_{*,A}h_A\|^2-\langle\delta D_{*,A}h_A,\underline{C}^* D_{*,C}h_C\rangle\\
&-\langle \underline{C}^* D_{*,C}h_C,\delta D_{*,A}h_A\rangle\\
=&\| D_{*,C}h_C\|^2+\| D_{\delta} D_{*,A}h_A\|^2+\|\underline{C}\delta D_{*,A}h_A\|^2-\langle \underline{C}\delta D_{*,A}h_A, D_{*,C}h_C\rangle\\
&-\langle  D_{*,C}h_C,\underline{C}\delta D_{*,A}h_A\rangle\\
=&\| D_{*,C}h_C-\underline{C}\delta D_{*,A}h_A\|^2+\| D_{\delta} D_{*,A}h_A\|^2.
\end{align*}
This gives the required isometry {from $\cld_{*,E}$ onto $\cld_{*,C}\oplus \cld_{\delta}$} defined in Equation (\ref{adjoint isometry}).
\end{proof}
Let $\underline{C}$ be a row contraction on $\mathcal{H}_C$, $\underline{E}$ be a minimal lifting of $\underline{C}$ by $\underline{A}$ on $\mathcal{H}_E=\mathcal{H}_C\oplus \mathcal{H}_A$, and $\underline{E}'$ be a minimal lifting of $\underline{E}$ by $\underline{A}'$ on $\mathcal{H}_{E'}=\mathcal{H}_E\oplus \mathcal{H}_{A'}$. Then there are $B_i \in B(\mathcal{H}_C, \mathcal{H}_A)$ and $B_i' \in B(\mathcal{H}_E, \mathcal{H}_{A'})$ such that
\begin{align}\label{equation steplifting}
	E_i=\begin{bmatrix}
		C_i&0\\
		B_i&A_i
	\end{bmatrix},\,E_i'=\begin{bmatrix}
	E_i&0\\
	B_i'&A_i'
\end{bmatrix}\text{ for }i=1,2,\ldots d.
\end{align} 
Also, for some operators $(B_1')_i$ and $(B_2')_i$ 
\[E_i'=\begin{bmatrix}
	C_i&0&0\\
	B_i&A_i&0\\
	(B_1')_i&(B_2')_i&A_i'
\end{bmatrix}\text{ where }B_i'=\begin{bmatrix}
(B_1')_i&(B_2')_i
\end{bmatrix}.\]
We set
\begin{equation} \label{contractions}
\hat{A}_i=\begin{bmatrix}
	A_i&0\\
	(B_2')_i&A_i'
\end{bmatrix}\text{ and }\hat{B}_i=\begin{bmatrix}
B_i\\
(B_1')_i
\end{bmatrix}.
\end{equation}
Then
\[ E_i'=\begin{bmatrix}
	C_i&0\\
	\hat{B}_i&\hat{A}_i
\end{bmatrix}. \]

\begin{remark} \label{delta}
	By Proposition \ref{Prop lifting}, there exist contractions $\gamma:\cld_{*,A}\rightarrow \cld_C$, $\gamma':\cld_{*,A'}\rightarrow \cld_E$, $\delta:\cld_{*, A'}\rightarrow \cld_A$ and $\hat{\gamma}:\cld_{*,\hat{A}}\rightarrow \cld_C,$ 
	 satisfying
	\begin{align*}
		\underline{B}= D_{*,A}\gamma^* D_C,\,\underline{B}'= D_{*,A'}\gamma'^* D_E,\,\underline{B_2}'= D_{*,A'}\delta^* D_A,\text{ and }\underline{\hat{B}}= D_{*,\hat{A}}\hat{\gamma}^* D_C.
	\end{align*}
\end{remark}
Now, for the characteristic function $M_{C,E'}$ we prove a factorization for $M_{C,E'}D_{E'_{\Gamma}}$, where the domain is restricted to $\Gamma\otimes \mathcal{H}_{\hat{A}}.$ 
\begin{proposition}\label{prop second component}
Let $\underline{C}$ be a row contraction on $\mathcal{H}_C$.	Let $\underline{E},\underline{E'},\underline{\hat{A}}, \hat{\gamma}$ and $\delta$ be as defined in Equations (\ref{equation steplifting}) and (\ref{contractions}), and Remark \ref{delta}. Then 
	\begin{equation*}
		{M_{C,E'}D_{E'_{\Gamma}}|_{\Gamma\otimes\mathcal{H}_{\hat{A}}}}=(I_{\Gamma}\otimes\hat{\gamma})\left(I_{\Gamma}\otimes(\sigma'_{\hat{A}})^{-1}\right)\begin{bmatrix}
			{M_{A}}&0\\
			0&I_{\Gamma\otimes{\cld}_{*,A'}}
		\end{bmatrix}\Bigg(I_{\Gamma}\otimes\begin{bmatrix}
		 D_{*,\delta}&\delta\\
		-\delta^*& D_{\delta}
	\end{bmatrix}\Bigg)\begin{bmatrix}
	I_{\Gamma\otimes{\cld}_A}&0\\
	0&{M_{A'}}
\end{bmatrix}
(I_{\Gamma}\otimes\sigma_{\hat{A}}) (I_{\Gamma}\otimes D_{\hat{A}})
\end{equation*}
where $\sigma_{\hat{A}}$ and $\sigma'_{\hat{A}}$  are the unitaries defined in Lemmas \ref{lemma isomtery} and \ref{lemma adjoint isometry} for the lifting $\underline{\hat{A}}$ of $\underline{A}.$ 
\end{proposition}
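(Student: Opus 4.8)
The plan is to exploit the two roles played by the ``middle'' data of the iterated lifting. By \eqref{contractions} and Remark \ref{delta}, $\underline{E'}$ is a minimal contractive lifting of $\underline{C}$ by $\underline{\hat{A}}$ with associated contraction $\hat{\gamma}\colon\cld_{*,\hat{A}}\to\cld_{C}$, while at the same time $\underline{\hat{A}}$ is a contractive lifting of $\underline{A}$ by $\underline{A'}$ with associated contraction $\delta\colon\cld_{*,A'}\to\cld_{A}$ (recall $\underline{B_2}'=D_{*,A'}\delta^{*}D_{A}$). Accordingly I would split the proof into (i) a reduction of $M_{C,E'}$ to the characteristic function $M_{\hat{A}}$ of the row contraction $\underline{\hat{A}}$, and (ii) a factorization of $M_{\hat{A}}$ via the Julia--Halmos matrix of $\delta$, in the spirit of \cite{NF,MAJI}; since the unitaries $\sigma_{\hat{A}},\sigma'_{\hat{A}}$ of the statement are precisely Lemmas \ref{lemma isomtery} and \ref{lemma adjoint isometry} applied to $\underline{\hat{A}}$ as a lifting of $\underline{A}$, this is the natural decomposition.

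For (i), apply \eqref{eqn characteristic for HA} to $\underline{E'}$ regarded as a lifting of $\underline{C}$ by $\underline{\hat{A}}$ (with contraction $\hat{\gamma}$) and factor $\hat{\gamma}_{\Gamma}=I_{\Gamma}\otimes\hat{\gamma}$ out on the left; this gives, for $r\in[0,1)$,
\[
M_{C,E'}(rR)\,D_{E'_{\Gamma}}|_{\Gamma\otimes\mathcal{H}_{\hat{A}}}=(I_{\Gamma}\otimes\hat{\gamma})\,M_{\hat{A}}(rR)\,D_{\hat{A}_{\Gamma}}.
\]
Letting $r\to1$ in the strong operator topology (using $M_{C,E'}=\mathrm{SOT}\text{-}\lim_{r\to1}M_{C,E'}(rR)$, $M_{\hat{A}}=\mathrm{SOT}\text{-}\lim_{r\to1}M_{\hat{A}}(rR)$, and that strong limits pass through the finitely many contractive factors involved), it remains to prove the identity of the statement with the factor $I_{\Gamma}\otimes\hat{\gamma}$ deleted from both sides, namely
\[
M_{\hat{A}}\,D_{\hat{A}_{\Gamma}}=\bigl(I_{\Gamma}\otimes(\sigma'_{\hat{A}})^{-1}\bigr)\begin{bmatrix}M_{A}&0\\0&I\end{bmatrix}\Bigl(I_{\Gamma}\otimes\begin{bmatrix}D_{*,\delta}&\delta\\-\delta^{*}&D_{\delta}\end{bmatrix}\Bigr)\begin{bmatrix}I&0\\0&M_{A'}\end{bmatrix}(I_{\Gamma}\otimes\sigma_{\hat{A}})(I_{\Gamma}\otimes D_{\hat{A}}).
\]

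For (ii) I would again work with the truncation $M_{\hat{A}}(rR)=-\underline{\hat{A}}_{\Gamma}+D_{*,\hat{A}_{\Gamma}}\bigl(I-r\,\underline{R}_{\mathcal{H}_{\hat{A}}}\underline{\hat{A}}^{*}_{\Gamma}\bigr)^{-1}r\,\underline{R}_{\mathcal{H}_{\hat{A}}}D_{\hat{A}_{\Gamma}}$ and insert the block decompositions given by Lemmas \ref{lemma isomtery} and \ref{lemma adjoint isometry} for the lifting $\underline{\hat{A}}$ of $\underline{A}$ by $\underline{A'}$:
\[
\sigma_{\hat{A}}D_{\hat{A}}=\begin{bmatrix}D_{*,\delta}D_{A}&0\\-\underline{A'}^{*}\delta^{*}D_{A}&D_{A'}\end{bmatrix},\qquad \sigma'_{\hat{A}}D_{*,\hat{A}}=\begin{bmatrix}D_{*,A}&-\underline{A}\,\delta D_{*,A'}\\0&D_{\delta}D_{*,A'}\end{bmatrix}.
\]
Because $\underline{\hat{A}}^{*}$ is block upper triangular with respect to $\mathcal{H}_{\hat{A}}=\mathcal{H}_{A}\oplus\mathcal{H}_{A'}$, the resolvent $\bigl(I-r\,\underline{R}_{\mathcal{H}_{\hat{A}}}\underline{\hat{A}}^{*}_{\Gamma}\bigr)^{-1}$ is block upper triangular with diagonal blocks $\bigl(I-r\,\underline{R}_{\mathcal{H}_{A}}\underline{A}^{*}_{\Gamma}\bigr)^{-1}$, $\bigl(I-r\,\underline{R}_{\mathcal{H}_{A'}}\underline{A'}^{*}_{\Gamma}\bigr)^{-1}$ and off-diagonal block built from these two and from $\underline{R}_{\mathcal{H}_{A}}(\underline{B_2}')^{*}_{\Gamma}$. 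Expanding the resulting $2\times2$ block product, I would turn each diagonal resolvent sandwich into a characteristic function via Lemma \ref{Lemma MAJI} (that is, $D_{*,A_{\Gamma}}\bigl(I-r\,\underline{R}_{\mathcal{H}_{A}}\underline{A}^{*}_{\Gamma}\bigr)^{-1}D_{*,A_{\Gamma}}=I+M_{A}(rR)\underline{A}^{*}_{\Gamma}$, and the analogue for $\underline{A'}$), and then reduce the remaining terms using the standard Julia--Halmos intertwinings $\delta D_{\delta}=D_{*,\delta}\delta$, $D_{\delta}\delta^{*}=\delta^{*}D_{*,\delta}$, $D_{*,\delta}^{2}=I-\delta\delta^{*}$, $D_{\delta}^{2}=I-\delta^{*}\delta$, together with the usual defect relations for $\underline{A}$, $\underline{A'}$ and the substitution $\underline{B_2}'=D_{*,A'}\delta^{*}D_{A}$. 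After collecting terms, the expression so obtained for $M_{\hat{A}}(rR)D_{\hat{A}_{\Gamma}}$ should be exactly the right-hand side of the displayed identity in (i) with $M_{A},M_{A'}$ replaced by $M_{A}(rR),M_{A'}(rR)$; a final passage $r\to1$ completes the proof.

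Part (i) and the raw algebra of defect operators are routine; the genuine difficulty is the bookkeeping in part (ii): showing that, after the substitution $\underline{B_2}'=D_{*,A'}\delta^{*}D_{A}$ and Lemma \ref{Lemma MAJI}, the off-diagonal contribution of the block resolvent recombines into precisely the $(1,2)$ entry $M_{A}\,\delta\,M_{A'}$ of the Julia--Halmos sandwich, with $D_{*,\delta}$, $-\delta^{*}$, $D_{\delta}$ occupying the other three slots and the unitaries $\sigma_{\hat{A}},\sigma'_{\hat{A}}$ ending up in the correct positions. Keeping the four blocks under control simultaneously, and carefully distinguishing the defect operators of $\underline{A}$, $\underline{A'}$, $\underline{\hat{A}}$ and of $\delta$, is where the care lies. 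Alternatively, part (ii) may be deduced from the row-contraction Julia--Halmos factorization of \cite{MAJI} by first interchanging the summands $\mathcal{H}_{A}$ and $\mathcal{H}_{A'}$ so that $\underline{\hat{A}}$ becomes upper triangular, applying that result, and transporting the identity back along the interchange.
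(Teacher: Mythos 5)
Your proposal is correct and follows essentially the same route as the paper's proof: the paper likewise starts from Equation (\ref{eqn characteristic for HA}) applied to $\underline{E'}$ as a lifting of $\underline{C}$ by $\underline{\hat{A}}$ (so that the content after factoring out $\hat{\gamma}_{\Gamma}$ is exactly $M_{\hat{A}}(rR)D_{\hat{A}_{\Gamma}}$), then inserts the unitaries of Lemmas \ref{lemma isomtery} and \ref{lemma adjoint isometry}, uses the block upper-triangular resolvent of $\underline{\hat{A}}^{*}_{\Gamma}$, Lemma \ref{Lemma MAJI}, and the substitution $\underline{B_2}'=D_{*,A'}\delta^{*}D_{A}$ to recombine the four blocks into the Julia--Halmos sandwich, and finally lets $r\to 1$. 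The bookkeeping you defer in part (ii) is precisely the $S,X,Y,Z$ computation carried out explicitly in the paper, and it closes as you predict.
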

\begin{proof}
By Equation (\ref{eqn characteristic for HA}), we have the following for $h\in \Gamma\otimes\mathcal{H}_{\hat{A}}$ and for $0<r<1$,
	\begin{align*}
		{M_{C,E'}(rR)D_{E'_{\Gamma}}}h=&(I_{\Gamma}\otimes\hat{\gamma})\left(-\underline{\hat{A}}_{\Gamma} D_{\hat{A}_{\Gamma}}+  D_{*,\hat{A}_{\Gamma}}\left(I-r  \underline{R}_H\underline{\hat{A}}_{\Gamma}^*\right)^{-1}r \underline{R}_H  D_{\hat{A}_{\Gamma}}^2\right)h\\
		=&(I_{\Gamma}\otimes\hat{\gamma})\left(- D_{*,\hat{A}_{\Gamma}}\underline{\hat{A}}_{\Gamma}+D_{*,\hat{A}_{\Gamma}}\left(I-r \underline{R}_H\underline{\hat{A}}_{\Gamma}^*\right)^{-1}r \underline{R}_H  D_{\hat{A}_{\Gamma}}^2\right)h.
	\end{align*} 
By Lemma \ref{lemma adjoint isometry}, we have \begin{align}\label{eqn first component}
	\sigma'_{\hat{A}} D_{*,\hat{A}}\hat{A}=&\begin{bmatrix}
		 D_{*,A}&-\underline{A}\delta D_{*,A'}\\
		0& D_{\delta} D_{*,A'}
	\end{bmatrix}\begin{bmatrix}
	\underline{A}&0\\
	\underline{B_2}'&\underline{A}' \nonumber
\end{bmatrix}\\
=&\begin{bmatrix}
	 D_{*,A}\underline{A}-\underline{A}\delta D_{*,A'}\underline{B_2}'&-\underline{A}\delta D_{*,A'}\underline{A}'\\
	 D_{{\delta}} D_{*,{A'}}\underline{B}_2'& D_{\delta} D_{*,{A'}}\underline{A}'
\end{bmatrix}. 
\end{align}
Note that 
\begin{equation*}
	\left(I-r \underline{R}_H\underline{\hat{A}}^*_{\Gamma}\right)^{-1}=\begin{bmatrix}
		(I-r \underline{R}_H\underline{A}^*_{\Gamma})^{-1}&(I-r \underline{R}_H\underline{A}^*_{\Gamma})^{-1}(r \underline{R}_H\underline{B_2}'^*_{\Gamma})(I-r \underline{R}_H\underline{A'}^*_{\Gamma})^{-1}\\
		0&(I-r \underline{R}_H\underline{A'}^*_{\Gamma})^{-1}
	\end{bmatrix}.
\end{equation*}
We estimate $\alpha=:(I_{\Gamma}\otimes\sigma'_{\hat{A}}) D_{*,\hat{A}_{\Gamma}}(I-r \underline{R}_H\underline{\hat{A}}_{\Gamma}^*)^{-1}$,  as follows 
\begin{align}\label{product operator}
	\alpha=&\begin{bmatrix}
		 D_{*,A_{\Gamma}}&- \underline{A}_{\Gamma}{\underline{\delta}_{\Gamma}} D_{*,A'_{\Gamma}}\\
		0& D_{\delta_{\Gamma}} D_{*,A'_{\Gamma}}
	\end{bmatrix}
\begin{bmatrix}
	(I-r \underline{R}_H \underline{A}_{\Gamma}^*)^{-1}&(I-r \underline{R}_H \underline{A}_{\Gamma}^*)^{-1}r \underline{R}_H\underline{B_2'^*}_{\Gamma}(I-r \underline{R}_H\underline{A'}_{\Gamma}^*)^{-1}\\
	0&(I-r \underline{R}_H\underline{A'}_{\Gamma}^*)^{-1}
\end{bmatrix}\\
=&\begin{bmatrix}
	 D_{*,A_{\Gamma}}(I-r \underline{R}_H \underline{A}_{\Gamma}^*)^{-1}& D_{*,A_{\Gamma}}(I-r \underline{R}_H \underline{A}_{\Gamma}^*)^{-1}r \underline{R}_H\underline{B_2'^*}_{\Gamma}(I-r \underline{R}_H\underline{A'}_{\Gamma}^*)^{-1}\\
	&- \underline{A}_{\Gamma}\delta_{\Gamma} D_{*,A'_{\Gamma}}(I-r \underline{R}_H\underline{A'}_{\Gamma}^*)^{-1}\\
	0& D_{\delta_{\Gamma}} D_{*,A'_{\Gamma}}(I-r \underline{R}_H\underline{A'}_{\Gamma}^*)^{-1}
\end{bmatrix}. \nonumber
\end{align}
By simple matrix multiplication, we get 
\begin{equation}\label{defect oper}
	 D_{\hat{A}}^2=\begin{bmatrix}
		 D_A^2-\underline{B_2'}^*\underline{B_2'}&-\underline{B_2'}^*\underline{A}'\\
		-\underline{A}'^*\underline{B_2'}& D_{A'}^2
	\end{bmatrix}.
\end{equation}
We denote 
\begin{equation}\label{eqn second component}
	(I_{\Gamma}\otimes\sigma'_{\hat{A}}) D_{*,\hat{A}_{\Gamma}}(I-r \underline{R}_H\underline{\hat{A}}_{\Gamma}^*)^{-1}r \underline{R}_H D_{\hat{A}_{\Gamma}}^2:=\begin{bmatrix}
		K&L\\
		M&N
	\end{bmatrix}.
\end{equation}
By Equations (\ref{product operator}) and (\ref{defect oper}), we compute $K,\,L,\,M$ and $N$. Using the fact that $\underline{B_2}'^*= D_A\delta D_{*,A'}$ (by Proposition \ref{Prop lifting}), we have
 \begin{align*}
	K=& D_{*,A_{\Gamma}}(I-r \underline{R}_H \underline{A}_{\Gamma}^*)^{-1}r \underline{R}_H D_{A_{\Gamma}}^2- D_{*,A_{\Gamma}}(I-r \underline{R}_H \underline{A}_{\Gamma}^*)^{-1}r \underline{R}_H\underline{B_2'}_{\Gamma}^*\underline{B_2'}_{\Gamma}- D_{*,A_{\Gamma}}(I-r \underline{R}_H \underline{A}_{\Gamma}^*)^{-1}r \underline{R}_H\\
	&\underline{B_2'^*}_{\Gamma}(I-r \underline{R}_H\underline{A'}_{\Gamma}^*)^{-1}r \underline{R}_H\underline{A'}_{\Gamma}^*\underline{B_2'}_{\Gamma}
	+ \underline{A}_{\Gamma}\delta_{\Gamma} D_{*,A'_{\Gamma}}(I-r \underline{R}_H\underline{A'}_{\Gamma}^*)^{-1}r \underline{R}_H\underline{A'}_{\Gamma}^*\underline{B_2'}_{\Gamma}\\
	=& D_{*,A_{\Gamma}}(I-r \underline{R}_H \underline{A}_{\Gamma}^*)^{-1}r \underline{R}_H D_{A_{\Gamma}}^2- D_{*,A_{\Gamma}}(I-r \underline{R}_H \underline{A}_{\Gamma}^*)^{-1}r \underline{R}_H\underline{B_2'}_{\Gamma}^*\underline{B_2'}_{\Gamma}-[- \underline{A}_{\Gamma}+ D_{*,A_{\Gamma}}(I-r \underline{R}_H \underline{A}_{\Gamma}^*)^{-1}\\
	&r \underline{R}_H D_{A_{\Gamma}}]\delta_{\Gamma} D_{*,A'_{\Gamma}}(I-r \underline{R}_H\underline{A'}_{\Gamma}^*)^{-1}r \underline{R}_H\underline{A'}_{\Gamma}^*\underline{B_2'}_{\Gamma}\\
	=& D_{*,A_{\Gamma}}(I-r \underline{R}_H \underline{A}_{\Gamma}^*)^{-1}r \underline{R}_H D_{A_{\Gamma}}^2- D_{*,A_{\Gamma}}(I-r \underline{R}_H \underline{A}_{\Gamma}^*)^{-1}r \underline{R}_H\underline{B_2'}_{\Gamma}^*\underline{B_2'}_{\Gamma}-{M_{A}}(rR)\delta_{\Gamma} D_{*,A'_{\Gamma}}(I-r \underline{R}_H\underline{A'}_{\Gamma}^*)^{-1}\\
	&r \underline{R}_H\underline{A'}_{\Gamma}^*\underline{B_2'}_{\Gamma},\\
	L=&- D_{*,A_{\Gamma}}(I-r \underline{R}_H \underline{A}_{\Gamma}^*)^{-1}r \underline{R}_H\underline{B_2'^*}_{\Gamma}\underline{A'}_{\Gamma}+ D_{*,A_{\Gamma}}(I-r \underline{R}_H \underline{A}_{\Gamma}^*)^{-1}r \underline{R}_H\underline{B_2'}_{\Gamma}^*(I-r \underline{R}_H\underline{A'}_{\Gamma}^*)^{-1}r \underline{R}_H D_{A'_{\Gamma}}^2\\
	&- \underline{A}_{\Gamma}\delta_{\Gamma} D_{*,A'_{\Gamma}}(I-r \underline{R}_H\underline{A'}_{\Gamma}^*)^{-1}r \underline{R}_H D_{A'_{\Gamma}}^2\\
	=&- D_{*,A_{\Gamma}}(I-r \underline{R}_H \underline{A}_{\Gamma}^*)^{-1}r \underline{R}_H\underline{B_2'}_{\Gamma}^*\underline{A'}_{\Gamma}+[- \underline{A}_{\Gamma}+ D_{*,A_{\Gamma}}(I-r \underline{R}_H \underline{A}_{\Gamma}^*)^{-1}r \underline{R}_H D_{A_{\Gamma}}]\delta_{\Gamma} D_{*,A'_{\Gamma}}\\
	&(I-r \underline{R}_H\underline{A'}_{\Gamma}^*)^{-1}
	r \underline{R}_H D_{A'_{\Gamma}}^2\\
	=&- D_{*,A_{\Gamma}}(I-r \underline{R}_H \underline{A}_{\Gamma}^*)^{-1}r \underline{R}_H\underline{B_2'}_{\Gamma}^*\underline{A'}_{\Gamma}+{M_{A}}(rR)\delta_{\Gamma} D_{*,A'_{\Gamma}}(I-r \underline{R}_H\underline{A'}_{\Gamma}^*)^{-1}r \underline{R}_H D_{A'_{\Gamma}}^2,
\end{align*}
and
\begin{align*}
	M=&- D_{\delta_{\Gamma}} D_{*,A'_{\Gamma}}(I-r \underline{R}_H\underline{A'}_{\Gamma}^*)^{-1}r \underline{R}_H\underline{A'}_{\Gamma}^*\underline{B_2'}_{\Gamma},\\
	N=& D_{\delta_{\Gamma}} D_{*,A'_{\Gamma}}(I-r \underline{R}_H\underline{A'}_{\Gamma}^*)^{-1}r \underline{R}_H D_{A'_{\Gamma}}^2.
\end{align*}
Let
$$-\left(I_{\Gamma}\otimes\sigma'_{\hat{A}}\right) D_{\hat{A}_{\Gamma}^*}\underline{\hat{A}}_{\Gamma}+	(I_{\Gamma}\otimes\sigma'_{\hat{A}}) D_{*,\hat{A}_{\Gamma}}(I-r \underline{R}_H\underline{\hat{A}}_{\Gamma}^*)^{-1}r \underline{R}_H D_{\hat{A}_{\Gamma}}^2:=\begin{bmatrix}
	S&X\\
	Y&Z
\end{bmatrix}.$$
By Equations (\ref{eqn first component}) and (\ref{eqn second component}), we have
\begin{equation*}
	\begin{bmatrix}
		S&X\\
		Y&Z
	\end{bmatrix}=-\begin{bmatrix}
	 D_{*,A_{\Gamma}} \underline{A}_{\Gamma}- \underline{A}_{\Gamma}\delta_{\Gamma} D_{*,A'_{\Gamma}}\underline{B_2'}_{\Gamma}&- \underline{A}_{\Gamma}\delta_{\Gamma} D_{*,A'_{\Gamma}}\underline{A'}_{\Gamma}\\
	 D_{\delta_{\Gamma}} D_{*,A'_{\Gamma}}\underline{B_2'}_{\Gamma}& D_{\delta_{\Gamma}} D_{*,A'_{\Gamma}}\underline{A'}_{\Gamma}
\end{bmatrix}+\begin{bmatrix}
	K&L\\
	M&N
\end{bmatrix}.
\end{equation*}
Again using $\underline{B_2}'= D_{*,A'}\delta^* D_A$ and Lemma \ref{Lemma MAJI},
\begin{align*}
	S=&- D_{*,A_{\Gamma}} \underline{A}_{\Gamma}+ \underline{A}_{\Gamma}\delta_{\Gamma} D_{*,A'_{\Gamma}}\underline{B_2'}_{\Gamma}+ D_{*,A_{\Gamma}}(I-r \underline{R}_H \underline{A}_{\Gamma}^*)^{-1}r \underline{R}_H D_{A_{\Gamma}}^2- D_{*,A_{\Gamma}}(I-r \underline{R}_H \underline{A}_{\Gamma}^*)^{-1}r \underline{R}_H\underline{B_2'}_{\Gamma}^*\underline{B_2'}_{\Gamma}\\
	&-{M_{A}}(rR)\delta_{\Gamma} D_{*,A'_{\Gamma}}(I-r \underline{R}_H\underline{A'}_{\Gamma}^*)^{-1}r \underline{R}_H\underline{A'}_{\Gamma}^*\underline{B_2'}_{\Gamma}\\
	=&\left(- \underline{A}_{\Gamma} D_{A_{\Gamma}}+ D_{*,A_{\Gamma}}(I-r \underline{R}_H \underline{A}_{\Gamma}^*)^{-1}r \underline{R}_H D_{A_{\Gamma}}^2\right)-\left(- \underline{A}_{\Gamma}+ D_{*,A_{\Gamma}}(I-r \underline{R}_H \underline{A}_{\Gamma}^*)^{-1}r \underline{R}_H D_{A_{\Gamma}}\right)\delta_{\Gamma} D_{*,A'_{\Gamma}}\underline{B_2'}_{\Gamma}\\
	&-{M_{A}}(rR)\delta_{\Gamma} D_{*,A'_{\Gamma}}(I-r \underline{R}_H\underline{A'}_{\Gamma}^*)^{-1}r \underline{R}_H\underline{A'}_{\Gamma}^*\underline{B_2'}_{\Gamma}\\
	=&{M_{A}}(rR) D_{A_{\Gamma}}-{M_{A}}(rR)\delta_{\Gamma} D_{*,A'_{\Gamma}}\underline{B_2'}_{\Gamma}-{M_{A}}(rR)\delta_{\Gamma} D_{*,A'_{\Gamma}}(I-r \underline{R}_H\underline{A'}_{\Gamma}^*)^{-1}r \underline{R}_H\underline{A'}_{\Gamma}^*\underline{B_2'}_{\Gamma}\\
	=&{M_{A}}(rR) D_{A_{\Gamma}}-{M_{A}}(rR)\delta_{\Gamma} D_{*,A'_{\Gamma}}(I-r \underline{R}_H\underline{A'}_{\Gamma}^*)^{-1}[I-r \underline{R}_H\underline{A'}_{\Gamma}^*+r \underline{R}_H\underline{A'}_{\Gamma}^*]\underline{B_2'}_{\Gamma}\\
	=&{M_{A}}(rR) D_{A_{\Gamma}}-{M_{A}}(rR)\delta_{\Gamma} D_{*,A'_{\Gamma}}(I-r \underline{R}_H\underline{A'}_{\Gamma}^*)^{-1} D_{*,A'_{\Gamma}}\delta_{\Gamma}^* D_{A_{\Gamma}}\\
	=&{M_{A}}(rR) D_{A_{\Gamma}}-{M_{A}}(rR)\delta_{\Gamma}[I+{M_{A'}}(rR)\underline{A'}_{\Gamma}^*]\delta_{\Gamma}^* D_{A_{\Gamma}}\\
	=&{M_{A}}(rR) D_{A_{\Gamma}}-{M_{A}}(rR)\delta_{\Gamma}\delta_{\Gamma}^* D_{A_{\Gamma}}-{M_{A}}(rR)\delta_{\Gamma}{M_{A'}}(rR)\underline{A'}_{\Gamma}^*\delta_{\Gamma}^* D_{A_{\Gamma}}\\
	=&{M_{A}}(rR) D_{*,\delta_{\Gamma}}^2 D_{A_{\Gamma}}-{M_{A}}(rR)\delta_{\Gamma}{M_{A'}}(rR)\underline{A'}_{\Gamma}^*\delta_{\Gamma}^* D_{A_{\Gamma}},
\end{align*}
\begin{align*}
	X=& \underline{A}_{\Gamma}\delta_{\Gamma} D_{*,A'_{\Gamma}}\underline{A'}_{\Gamma}- D_{*,A_{\Gamma}}(I-r \underline{R}_H \underline{A}_{\Gamma}^*)^{-1}r \underline{R}_H\underline{B_2'}_{\Gamma}^*\underline{A'}_{\Gamma}+{M_{A}}(rR)\delta_{\Gamma} D_{*,A'_{\Gamma}}(I-r \underline{R}_H\underline{A'}_{\Gamma}^*)^{-1}r \underline{R}_H D_{A'_{\Gamma}}^2\\
	=&-[- \underline{A}_{\Gamma}+ D_{*,A_{\Gamma}}(I-r \underline{R}_H \underline{A}_{\Gamma}^*)^{-1}r \underline{R}_H D_{A_{\Gamma}}]\delta_{\Gamma} D_{*,A'_{\Gamma}}\underline{A'}_{\Gamma}+{M_{A}}(rR)\delta_{\Gamma} D_{*,A'_{\Gamma}}(I-r \underline{R}_H\underline{A'}_{\Gamma}^*)^{-1}r \underline{R}_H D_{A'_{\Gamma}}^2\\
	=&-{M_{A}}(rR)\delta_{\Gamma} D_{*,A'_{\Gamma}}\underline{A'}_{\Gamma}+{M_{A}}(rR)\delta_{\Gamma} D_{*,A'_{\Gamma}}(I-r \underline{R}_H\underline{A'}_{\Gamma}^*)^{-1}r \underline{R}_H D_{A'_{\Gamma}}^2\\
	=&{M_{A}}(rR)\delta_{\Gamma}\left(- D_{*,A'_{\Gamma}}\underline{A'}_{\Gamma}+ D_{*,A'_{\Gamma}}(I-r \underline{R}_H\underline{A'}_{\Gamma}^*)^{-1}r \underline{R}_H D_{A'_{\Gamma}}^2\right)\\
	=&{M_{A}}(rR)\delta_{\Gamma}{M_{A'}}(rR) D_{A'_{\Gamma}},\\
	Y=&- D_{\delta_{\Gamma}} D_{*,A'_{\Gamma}}\underline{B_2'}_{\Gamma}- D_{\delta_{\Gamma}} D_{*,A'_{\Gamma}}(I-r \underline{R}_H\underline{A'}_{\Gamma}^*)^{-1}r \underline{R}_H\underline{A'}_{\Gamma}^*\underline{B_2'}_{\Gamma}\\
	=&- D_{\delta_{\Gamma}} D_{*,A'_{\Gamma}}(I-r \underline{R}_H\underline{A'}_{\Gamma}^*)^{-1}[I-r \underline{R}_H\underline{A'}_{\Gamma}^*+r \underline{R}_H\underline{A'}_{\Gamma}^*]\underline{B_2'}_{\Gamma}\\
	=&- D_{\delta_{\Gamma}} D_{*,A'_{\Gamma}}(I-r \underline{R}_H\underline{A'}_{\Gamma}^*)^{-1}\underline{B_2'}_{\Gamma}\\
	=&- D_{\delta_{\Gamma}} D_{*,A'_{\Gamma}}(I-r \underline{R}_H\underline{A'}_{\Gamma}^*)^{-1} D_{*,A'_{\Gamma}}\delta_{\Gamma}^* D_{A_{\Gamma}}\\
	=&- D_{\delta_{\Gamma}}[I+{M_{A'}}(rR)\underline{A'}_{\Gamma}^*]\delta_{\Gamma}^* D_{A_{\Gamma}}\\
	=&-\delta_{\Gamma}^* D_{*,\delta_{\Gamma}} D_{A_{\Gamma}}- D_{\delta_{\Gamma}}{M_{A'}}(rR)\underline{A'}_{\Gamma}^*\delta_{\Gamma}^* D_{A_{\Gamma}},\\
\end{align*}
and
\begin{align*}
	Z=&- D_{\delta_{\Gamma}} D_{*,A'_{\Gamma}}\underline{A'}_{\Gamma}+ D_{\delta_{\Gamma}} D_{*,A'_{\Gamma}}(I-r \underline{R}_H\underline{A'}_{\Gamma}^*)^{-1}r \underline{R}_H D_{A'_{\Gamma}}^2\\
	=& D_{\delta_{\Gamma}}\left(- D_{*,A'_{\Gamma}}\underline{A'}_{\Gamma}+ D_{*,A'_{\Gamma}}(I-r \underline{R}_H\underline{A'}^*_{\Gamma})^{-1}r \underline{R}_H D_{A'_{\Gamma}}^2\right)\\
	=& D_{\delta_{\Gamma}}{M_{A'}}(rR) D_{A'_{\Gamma}}
\end{align*}
We know that
$${M_{C,E'}(rR)D_{E'_{\Gamma}}|_{\Gamma\otimes\mathcal{H}_{\hat{A}}}}=(I_{\Gamma}\otimes\hat{\gamma})\left(I_{\Gamma}\otimes(\sigma'_{\hat{A}})^{-1}\right)\begin{bmatrix}
	S&X\\
	Y&Z
\end{bmatrix},$$
where
\begin{align*}
\begin{bmatrix}
	S&X\\
	Y&Z
\end{bmatrix}=&\begin{bmatrix}
{M_{A}}(rR) D_{*,\delta_{\Gamma}}^2 D_{
	A_{\Gamma}}-{M_{A}}(rR)\delta_{\Gamma}M_{A'}(rR)\underline{A'}_{\Gamma}^*\delta_{\Gamma}^* D_{A_{\Gamma}}&{M_{A}}(rR)\delta_{\Gamma}{M_{A'}}(rR) D_{A'_{\Gamma}}\\
-\delta_{\Gamma}^* D_{*,\delta_{\Gamma}} D_{A_{\Gamma}}- D_{\delta_{\Gamma}}{M_{A'}}(rR)\underline{A'}_{\Gamma}^*\delta_{\Gamma}^* D_{A_{\Gamma}}& D_{\delta_{\Gamma}}{M_{A'}}(rR) D_{A'_{\Gamma}}
\end{bmatrix}\\
=&\begin{bmatrix}
	{M_{A}}(rR) D_{*,\delta_{\Gamma}}&{M_{A}}(rR)\delta_{\Gamma}{M_{A'}}(rR)\\
	-\delta_{\Gamma}^*& D_{\delta_{\Gamma}}{M_{A'}}(rR)
\end{bmatrix}\begin{bmatrix}
 D_{*,\delta_{\Gamma}} D_{A_{\Gamma}}&0\\
-\underline{A'}_{\Gamma}^*\delta_{\Gamma}^* D_{A_{\Gamma}}& D_{A'_{\Gamma}}
\end{bmatrix}\\
=&\begin{bmatrix}
	{M_{A}}(rR) D_{*,\delta_{\Gamma}}&{M_{A}}(rR)\delta_{\Gamma}{M_{A'}}(rR)\\
	-\delta_{\Gamma}^*& D_{\delta_{\Gamma}}{M_{A'}}(rR)
\end{bmatrix}(I_{\Gamma}\otimes\sigma_{\hat{A}}) D_{\hat{A}_{\Gamma}}\\
=&\begin{bmatrix}
	{M_{A}}(rR)&0\\
	0& I_{\Gamma\otimes {\cld}_{*,A'}}
\end{bmatrix}\begin{bmatrix}
 D_{*,\delta_{\Gamma}}&\delta_{\Gamma}{M_{A'}}(rR)\\
-\delta_{\Gamma}^*& D_{\delta_{\Gamma}}{M_{A'}}(rR)
\end{bmatrix}(I_{\Gamma}\otimes\sigma_{\hat{A}}) D_{\hat{A}_{\Gamma}}\\
=&\begin{bmatrix}
	{M_{A}}(rR)&0\\
	0& I_{\Gamma\otimes {\cld}_{*,A'}}
\end{bmatrix}\begin{bmatrix}
 D_{*,\delta_{\Gamma}}&\delta_{\Gamma}\\
-\delta_{\Gamma}^*& D_{\delta_{\Gamma}}
\end{bmatrix}\begin{bmatrix}
I_{\Gamma\otimes{\cld}_{A}}&0\\
0&{M_{A'}}(rR)
\end{bmatrix}(I_{\Gamma}\otimes\sigma_{\hat{A}}) D_{\hat{A}_{\Gamma}}.
\end{align*}
Consequently, we have
\begin{equation*}
	{M_{C,E'}(rR)D_{E'_{\Gamma}}|_{\Gamma\otimes\mathcal{H}_{\hat{A}}}}	=\hat{\gamma}_{\Gamma}(I_{\Gamma}\otimes(\sigma'_{\hat{A}})^{-1})\begin{bmatrix}
		{M_{A}}(rR)&0\\
		0& I_{\Gamma\otimes {\cld}_{*,A'}}
	\end{bmatrix}\Bigg(I_{\Gamma}\otimes \begin{bmatrix}
		 D_{*, \delta}&\delta\\
		-\delta^*& D_{\delta}
	\end{bmatrix}\Bigg)\begin{bmatrix}
		I_{\Gamma\otimes{\cld}_{A}}&0\\
		0&{M_{A'}}(rR)
	\end{bmatrix}(I_{\Gamma}\otimes\sigma_{\hat{A}}) D_{\hat{A}_{\Gamma}}.
\end{equation*}
Taking the limit $r$ goes to $1$, we get the desired result.
\end{proof}
Now we establish a similar factorization as in Proposition \ref{prop second component} for  $M_{C,E'}D_{E'_{\Gamma}}$ where the domain is restricted to $\Gamma\otimes \mathcal{H}_C$.
\begin{proposition}\label{prop firstcomponent}
		Let $\underline{C}$ be a row contraction on $\mathcal{H}_C$.	Let $\underline{E},\underline{E'},\underline{\hat{A}}, \hat{\gamma}$ and $\delta$ be as defined in Equations (\ref{equation steplifting}) and (\ref{contractions}), and Remark \ref{delta}. Then
\begin{align*}
			{M_{C,E'} D_{E'_{\Gamma}}|_{\Gamma\otimes\mathcal{H}_C}}=&\begin{bmatrix}
				 D_{*,\hat{\gamma}_{\Gamma}}&\hat{\gamma}_{\Gamma}
			\end{bmatrix}\begin{bmatrix}
			I_{\Gamma\otimes{\cld}_C}&0\\
			0&I_{\Gamma}\otimes(\sigma'_{\hat{A}})^{-1}
		\end{bmatrix}
	\begin{bmatrix}
		I_{\Gamma\otimes{\cld}_C}&0&0\\
		0&{M_{A}}&0\\
		0&0&I_{\Gamma\otimes{\cld}_{*,A'}}
	\end{bmatrix}\Bigg(I_{\Gamma}\otimes\\
&\begin{bmatrix}
	I_{{\cld}_C}&0&0\\
	0& D_{*,\delta}&\delta\\
	0&-\delta^*& D_{\delta}
\end{bmatrix}\Bigg)
\begin{bmatrix}
I_{\Gamma\otimes{\cld}_{C}}&0&0\\
0&I_{\Gamma\otimes{\cld}_{A}}&0\\
0&0&{M_{A'}}
\end{bmatrix}\begin{bmatrix}
I_{\Gamma\otimes{\cld}_{C}}&0\\
0&I_{\Gamma}\otimes\sigma_{\hat{A}}
\end{bmatrix}\\
&(I_{\Gamma}\otimes\sigma_{E'} D_{E'}|_{\mathcal{H}_C})
\end{align*} 
where $\sigma_{\hat{A}}$ and $\sigma'_{\hat{A}}$  are the unitaries defined in Lemmas \ref{lemma isomtery} and \ref{lemma adjoint isometry} for the lifting $\underline{\hat{A}}$ of $\underline{A},$ and $\sigma_{E'}$ is the unitary defined in Lemma \ref{lemma isomtery} for the lifting $\underline{E'}$ of $\underline{C}.$
\end{proposition}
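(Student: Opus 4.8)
The plan is to run the scheme of the proof of Proposition \ref{prop second component}, but to start from Equation (\ref{eqn characteristic for HC}) --- the piece of the characteristic function coming from $\mathcal{H}_C$ --- rather than from Equation (\ref{eqn characteristic for HA}), and then to recycle the factorization of $M_{\hat{A}}$ that is already contained in the proof of Proposition \ref{prop second component}.

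First, I would specialize Equation (\ref{eqn characteristic for HC}) to $\underline{E'}$ regarded as a minimal contractive lifting of $\underline{C}$ by $\underline{\hat{A}}$, with resolving contraction $\hat{\gamma}$ (as in Equation (\ref{contractions}) and Remark \ref{delta}); this gives, for $0<r<1$,
\[
M_{C,E'}(rR)D_{E'_{\Gamma}}|_{\Gamma\otimes\mathcal{H}_C}
= D_{C_{\Gamma}} - \hat{\gamma}_{\Gamma}\, D_{*,\hat{A}_{\Gamma}}\bigl(I-r\,\underline{R}_H\underline{\hat{A}}_{\Gamma}^{*}\bigr)^{-1} D_{*,\hat{A}_{\Gamma}}\,\hat{\gamma}_{\Gamma}^{*} D_{C_{\Gamma}}.
\]
By Lemma \ref{Lemma MAJI} applied to $\underline{\hat{A}}$ the bracketed triple product is $I+M_{\hat{A}}(rR)\underline{\hat{A}}_{\Gamma}^{*}$, and since $D_{C_{\Gamma}}-\hat{\gamma}_{\Gamma}\hat{\gamma}_{\Gamma}^{*}D_{C_{\Gamma}}=D_{*,\hat{\gamma}_{\Gamma}}^{2}D_{C_{\Gamma}}$, the right-hand side equals $D_{*,\hat{\gamma}_{\Gamma}}\bigl(D_{*,\hat{\gamma}_{\Gamma}}D_{C_{\Gamma}}\bigr)+\hat{\gamma}_{\Gamma}M_{\hat{A}}(rR)\bigl(-\underline{\hat{A}}_{\Gamma}^{*}\hat{\gamma}_{\Gamma}^{*}D_{C_{\Gamma}}\bigr)$. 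Now I would read off from Lemma \ref{lemma isomtery}, applied to the lifting $\underline{E'}$ of $\underline{C}$ by $\underline{\hat{A}}$, that $D_{*,\hat{\gamma}}D_C$ and $-\underline{\hat{A}}^{*}\hat{\gamma}^{*}D_C$ are precisely the two entries of the first block column $\sigma_{E'}D_{E'}|_{\mathcal{H}_C}$ of $\sigma_{E'}D_{E'}$; in particular, as $\sigma_{E'}D_{E'}$ takes values in $\cld_{*,\hat{\gamma}}\oplus\cld_{\hat{A}}$, the second entry has range in $\cld_{\hat{A}}=\overline{\text{range }D_{\hat{A}}}$, which is the domain of $M_{\hat{A}}(rR)$. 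Hence the above rewrites as
\[
M_{C,E'}(rR)D_{E'_{\Gamma}}|_{\Gamma\otimes\mathcal{H}_C}
= \begin{bmatrix}D_{*,\hat{\gamma}_{\Gamma}}&\hat{\gamma}_{\Gamma}\end{bmatrix}
\begin{bmatrix}I_{\Gamma\otimes\cld_C}&0\\0&M_{\hat{A}}(rR)\end{bmatrix}
\bigl(I_{\Gamma}\otimes\sigma_{E'}D_{E'}|_{\mathcal{H}_C}\bigr).
\]

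Next, I would insert the factorization of $M_{\hat{A}}$. The computation inside the proof of Proposition \ref{prop second component}, combined with the intertwining relation $\underline{\hat{A}}_{\Gamma}D_{\hat{A}_{\Gamma}}=D_{*,\hat{A}_{\Gamma}}\underline{\hat{A}}_{\Gamma}$, establishes that
\[
(I_{\Gamma}\otimes\sigma'_{\hat{A}})\,M_{\hat{A}}(rR)\,D_{\hat{A}_{\Gamma}}
= \begin{bmatrix}M_{A}(rR)&0\\0&I_{\Gamma\otimes\cld_{*,A'}}\end{bmatrix}
\Bigl(I_{\Gamma}\otimes\begin{bmatrix}D_{*,\delta}&\delta\\-\delta^{*}&D_{\delta}\end{bmatrix}\Bigr)
\begin{bmatrix}I_{\Gamma\otimes\cld_A}&0\\0&M_{A'}(rR)\end{bmatrix}
(I_{\Gamma}\otimes\sigma_{\hat{A}})\,D_{\hat{A}_{\Gamma}}.
\]
Since $\sigma'_{\hat{A}}$ is unitary and $D_{\hat{A}_{\Gamma}}$ has dense range in $\Gamma\otimes\cld_{\hat{A}}$, this shows that $M_{\hat{A}}(rR)$ agrees on $\Gamma\otimes\cld_{\hat{A}}$ with $(I_{\Gamma}\otimes(\sigma'_{\hat{A}})^{-1})$ applied to the block product on the right-hand side above (with $D_{\hat{A}_{\Gamma}}$ removed). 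Substituting this for $M_{\hat{A}}(rR)$ in the block form obtained in the previous paragraph, and padding each factor with the identity on the $\cld_C$-slot --- using $\begin{bmatrix}D_{*,\hat{\gamma}_{\Gamma}}&\hat{\gamma}_{\Gamma}\end{bmatrix}\begin{bmatrix}I&0\\0&S\end{bmatrix}=\begin{bmatrix}D_{*,\hat{\gamma}_{\Gamma}}&\hat{\gamma}_{\Gamma}S\end{bmatrix}$ together with the evident block identities to reassemble the five middle factors --- produces precisely the asserted identity, but with $M_{A}(rR),M_{A'}(rR)$ in place of $M_{A},M_{A'}$. Finally, letting $r\to1$ completes the proof: $M_{C,E'}(rR)\to M_{C,E'}$, $M_{A}(rR)\to M_{A}$ and $M_{A'}(rR)\to M_{A'}$ in the strong operator topology, every remaining factor is fixed, and all factors are uniformly bounded, so the limit passes through the finite product.

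Most of this is bookkeeping once Proposition \ref{prop second component} is in hand. The one step that needs some care is the passage to the block form with $\begin{bmatrix}D_{*,\hat{\gamma}_{\Gamma}}&\hat{\gamma}_{\Gamma}\end{bmatrix}$ in front: one has to notice that $D_{C_{\Gamma}}-\hat{\gamma}_{\Gamma}\hat{\gamma}_{\Gamma}^{*}D_{C_{\Gamma}}$ and $-\hat{\gamma}_{\Gamma}M_{\hat{A}}(rR)\underline{\hat{A}}_{\Gamma}^{*}\hat{\gamma}_{\Gamma}^{*}D_{C_{\Gamma}}$ are the two halves of a single product in which $\sigma_{E'}D_{E'}|_{\mathcal{H}_C}$ sits on the right, which is exactly what makes $\sigma_{E'}$ appear in the final formula. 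If one wished to avoid quoting Proposition \ref{prop second component}, the genuine obstacle would be re-deriving the factorization of $M_{\hat{A}}$, i.e.\ repeating the long block-matrix computation of that proof (the analogues of the operators $K,L,M,N$ and $S,X,Y,Z$) --- for which there is no need here.
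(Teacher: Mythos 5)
Your proposal is correct and follows essentially the same route as the paper: start from Equation (\ref{eqn characteristic for HC}) for the lifting of $\underline{C}$ by $\underline{\hat{A}}$, apply Lemma \ref{Lemma MAJI}, recognize the resulting pair $\bigl(D_{*,\hat{\gamma}}D_C,\,-\underline{\hat{A}}^{*}\hat{\gamma}^{*}D_C\bigr)$ as $\sigma_{E'}D_{E'}|_{\mathcal{H}_C}$ via Lemma \ref{lemma isomtery}, substitute the factorization from Proposition \ref{prop second component}, and let $r\to 1$. The only (harmless) difference is presentational: the paper keeps the middle block as $M_{C,E'}(rR)D_{E'_{\Gamma}}|_{\Gamma\otimes\mathcal{H}_{\hat{A}}}$ and quotes the statement of Proposition \ref{prop second component} directly, whereas you isolate $M_{\hat{A}}(rR)$ and make explicit the density argument that removes $D_{\hat{A}_{\Gamma}}$ — a point the paper leaves implicit.
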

\begin{proof}
	By Equation (\ref{eqn characteristic for HC}) and Lemma \ref{Lemma MAJI}, we know that
	\begin{align*}
		{M_{C,E'}(rR)D_{E'_{\Gamma}}|_{\Gamma\otimes\mathcal{H}_C}}=& D_{C_{\Gamma}}-\hat{\gamma}_{\Gamma} D_{*,\hat{A}_{\Gamma}}(I-r \underline{R}_H\underline{\hat{A}}_{\Gamma}^*)^{-1} D_{*,\hat{A}_{\Gamma}}\hat{\gamma}_{\Gamma}^* D_{C_{\Gamma}}\\
		=& D_{C_{\Gamma}}-\hat{\gamma}_{\Gamma}[I+{M_{\hat{A}}}(rR)\underline{\hat{A}}_{\Gamma}^*]\hat{\gamma}_{\Gamma}^* D_{C_{\Gamma}}\\
	=& D_{C_{\Gamma}}-\hat{\gamma}_{\Gamma}\hat{\gamma}_{\Gamma}^* D_{C_{\Gamma}}- {M_{C,E'}(rR)D_{E'_{\Gamma}}|_{\Gamma\otimes\mathcal{H}_{\hat{A}}}}\underline{\hat{A}}_{\Gamma}^*\hat{\gamma}_{\Gamma}^* D_{C_{\Gamma}}\\
		=& D_{*,\hat{\gamma}_{\Gamma}}^2 D_{C_{\Gamma}}-{M_{C,E'}(rR)D_{E'_{\Gamma}}|_{\Gamma\otimes\mathcal{H}_{\hat{A}}}}\underline{\hat{A}}_{\Gamma}^*\hat{\gamma}_{\Gamma}^* D_{C_{\Gamma}}\\
		=&\begin{bmatrix}
			 D_{*,\hat{\gamma}_{\Gamma}}&I_{\Gamma\otimes{\cld}_{C}}
		\end{bmatrix}
	\begin{bmatrix}
	I_{\Gamma\otimes{\cld}_{C}}&0\\
		0&{M_{C,E'}(rR) D_{E'_{\Gamma}}|_{\Gamma\otimes\mathcal{H}_{\hat{A}}}}
	\end{bmatrix}
\begin{bmatrix}
	 D_{*,\hat{\gamma}_{\Gamma}} D_{C_{\Gamma}}\\
	-\underline{\hat{A}}_{\Gamma}^*\hat{\gamma}_{\Gamma}^* D_{C_{\Gamma}}
\end{bmatrix}\\
=&\begin{bmatrix}
	 D_{*,\hat{\gamma}_{\Gamma}}&I_{\Gamma\otimes{\cld}_{C}}
\end{bmatrix}
\begin{bmatrix}
	I_{\Gamma\otimes{\cld}_C}&0\\
	0&{M_{C,E'}(rR)D_{E'_{\Gamma}}|_{\Gamma\otimes\mathcal{H}_{\hat{A}}}}
\end{bmatrix}(I_{\Gamma}\otimes\sigma_{E'} D_{E'}|_{\mathcal{H}_C}).
		\end{align*}
	By Proposition \ref{prop second component}, we have the following
	\begin{align*}
		{M_{C,E'}(rR)D_{E'_{\Gamma}}|_{\Gamma\otimes\mathcal{H}_C}}=&\begin{bmatrix}
			 D_{*,\hat{\gamma}_{\Gamma}}&I_{\Gamma\otimes{\cld}_{C}}
		\end{bmatrix}\begin{bmatrix}
			I_{\Gamma\otimes{\cld}_C}&{0}\\
			0&\hat{\gamma}_{\Gamma}(I_{\Gamma}\otimes(\sigma'_{\hat{A}})^{-1})
		\end{bmatrix}
		\begin{bmatrix}
			I_{\Gamma\otimes{\cld}_C}&0&0\\
			0&{M_{A}}(rR)&0\\
			0&0&I_{\Gamma\otimes{\cld}_{*,A'}}
		\end{bmatrix}\Bigg(I_{\Gamma}\otimes\\
		&\begin{bmatrix}
			I_{{\cld}_C}&0&0\\
			0& D_{*,\delta}&\delta\\
			0&-\delta^*& D_{\delta}
		\end{bmatrix}\Bigg)
		\begin{bmatrix}
			I_{\Gamma\otimes{\cld}_C}&0&0\\
			0&I_{\Gamma\otimes{\cld}_A}&0\\
			0&0&{M_{A'}}(rR)
		\end{bmatrix}\begin{bmatrix}
			I_{\Gamma\otimes{\cld}_C}&0\\
			0&{I_{\Gamma}\otimes}\sigma_{\hat{A}}
		\end{bmatrix}
			\big(I_{\Gamma}\otimes\sigma_{E'} D_{E'}|_{\mathcal{H}_C}\big)\\
			=&\begin{bmatrix}
				 D_{*,\hat{\gamma}_{\Gamma}}&\hat{\gamma}_{\Gamma}
			\end{bmatrix}\begin{bmatrix}
			I_{\Gamma\otimes{\cld}_C}&{0}\\
			0&I_{\Gamma}\otimes(\sigma'_{\hat{A}})^{-1}
		\end{bmatrix}
	\begin{bmatrix}
	I_{\Gamma\otimes{\cld}_C}&0&0\\
	0&{M_{A}}(rR)&0\\
	0&0&I_{\Gamma\otimes{\cld}_{*,A'}}
\end{bmatrix}\Bigg(I_{\Gamma}\otimes\\
&\begin{bmatrix}
I_{{\cld}_C}&0&0\\
0& D_{*,\delta}&\delta\\
0&-\delta^*& D_{\delta}
\end{bmatrix}\Bigg)
\begin{bmatrix}
I_{\Gamma\otimes{\cld}_C}&0&0\\
0&I_{\Gamma\otimes{\cld}_A}&0\\
0&0&{M_{A'}}(rR)
\end{bmatrix}\begin{bmatrix}
I_{\Gamma\otimes{\cld}_C}&0\\
0&{I_{\Gamma}\otimes}\sigma_{\hat{A}}
\end{bmatrix}
\big(I_{\Gamma}\otimes\sigma_{E'} D_{E'}|_{\mathcal{H}_C}\big).
	\end{align*}
Now, by taking the limit $r\rightarrow 1$, we get the result.
\end{proof}
Combining Proposition \ref{prop second component} and Proposition \ref{prop firstcomponent}, we obtain the following presentation for the characteristic function.
\begin{theorem} \label{fact}
		Let $\underline{C}$ be a row contraction on $\mathcal{H}_C$.	Let $\underline{E},\underline{E'},\underline{\hat{A}}, \hat{\gamma}$ and $\delta$ be as defined in Equations (\ref{equation steplifting}) and (\ref{contractions}), and Remark \ref{delta}. Then
	\begin{align*}
		{M_{C,E'} D_{E'_{\Gamma}}}=&\begin{bmatrix}
			 D_{*,\hat{\gamma}_{\Gamma}}&\hat{\gamma}_{\Gamma}
		\end{bmatrix}\begin{bmatrix}
		I_{\Gamma\otimes{\cld}_C}&{0}\\
		0&I_{\Gamma}\otimes(\sigma'_{\hat{A}})^{-1}
	\end{bmatrix}
\begin{bmatrix}
I_{\Gamma\otimes{\cld}_C}&0&0\\
0&{M_{A}}&0\\
0&0&I_{\Gamma\otimes{\cld}_{*,A'}}
\end{bmatrix}\Bigg(I_{\Gamma}\otimes\\
&\begin{bmatrix}
I_{{\cld}_C}&0&0\\
0& D_{*,\delta}&\delta\\
0&-\delta^*& D_{\delta}
\end{bmatrix}\Bigg)
\begin{bmatrix}
I_{\Gamma\otimes{\cld}_{C}}&0&0\\
0&I_{\Gamma\otimes{\cld}_{A}}&0\\
0&0&{M_{A'}}
\end{bmatrix}\begin{bmatrix}
I_{\Gamma\otimes{\cld}_{C}}&0\\
0&I_{\Gamma}\otimes\sigma_{\hat{A}}
\end{bmatrix}\\
&(I_{\Gamma}\otimes\sigma_{E'} D_{E'}).
	\end{align*}
	where $\sigma_{\hat{A}}$ and $\sigma'_{\hat{A}}$  are the unitaries defined in Lemmas \ref{lemma isomtery} and \ref{lemma adjoint isometry} for the lifting $\underline{\hat{A}}$ of $\underline{A},$ and $\sigma_{E'}$ is the unitary defined in Lemma \ref{lemma isomtery} for the lifting $\underline{E'}$ of $\underline{C}.$
\end{theorem}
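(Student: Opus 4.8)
The plan is to reduce Theorem \ref{fact} to the two propositions that precede it; essentially no new computation is required. First I would observe that Proposition \ref{prop firstcomponent} already gives the claimed factorization of $M_{C,E'}D_{E'_\Gamma}$ restricted to $\Gamma\otimes\mathcal{H}_C$, and Proposition \ref{prop second component} gives the corresponding factorization on $\Gamma\otimes\mathcal{H}_{\hat A}$. Since $\mathcal{H}_{E'}=\mathcal{H}_C\oplus\mathcal{H}_{\hat A}$ (recall $E_i'=\begin{bmatrix}C_i&0\\ \hat B_i&\hat A_i\end{bmatrix}$ with $\hat A$ acting on $\mathcal{H}_A\oplus\mathcal{H}_{A'}$, so that $\mathcal{H}_{\hat A}=\mathcal{H}_A\oplus\mathcal{H}_{A'}$ and $\mathcal{H}_C\oplus\mathcal{H}_{\hat A}=\mathcal{H}_{E'}$), the operator $M_{C,E'}D_{E'_\Gamma}$ is completely determined by its restrictions to these two complementary subspaces $\Gamma\otimes\mathcal{H}_C$ and $\Gamma\otimes\mathcal{H}_{\hat A}$.

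The key step is to check that the right-hand sides of the two propositions are the ``restrictions'' of a single operator-matrix expression. Concretely, I would write the big product appearing in the statement of Theorem \ref{fact} as a composition $\mathcal{P}\,(I_\Gamma\otimes\sigma_{E'}D_{E'})$, where $\mathcal{P}$ denotes the string of block-operator factors (the row $\begin{bmatrix}D_{*,\hat\gamma_\Gamma}&\hat\gamma_\Gamma\end{bmatrix}$, the block-diagonal unitaries involving $\sigma'_{\hat A}$ and $\sigma_{\hat A}$, the block-diagonal $M_A$, $M_{A'}$ terms, and the middle Julia--Halmos-type block with $\delta$). By Lemma \ref{lemma isomtery} applied to the lifting $\underline{E'}$ of $\underline{C}$, the unitary $\sigma_{E'}$ satisfies $\sigma_{E'}D_{E'}=\begin{bmatrix}D_{*,\hat\gamma}D_C&0\\ -\hat A^*\hat\gamma^*D_C&D_{\hat A}\end{bmatrix}$ (identifying $\delta$ of that lemma with $\hat\gamma$ and $\underline{A}$ there with $\underline{\hat A}$). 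The first column of $I_\Gamma\otimes\sigma_{E'}D_{E'}$ — which is what hits $\Gamma\otimes\mathcal{H}_C$ — is exactly $I_\Gamma\otimes\sigma_{E'}D_{E'}|_{\mathcal{H}_C}$, so applying $\mathcal{P}$ to it reproduces precisely the formula of Proposition \ref{prop firstcomponent}. The second ``column'' — what hits $\Gamma\otimes\mathcal{H}_{\hat A}$ — is $I_\Gamma\otimes\begin{bmatrix}0\\ D_{\hat A}\end{bmatrix}=I_\Gamma\otimes D_{\hat A}$ composed into $\mathcal{P}$; after noting that the row $\begin{bmatrix}D_{*,\hat\gamma_\Gamma}&\hat\gamma_\Gamma\end{bmatrix}$ acting on the block-diagonal with an $I_{\Gamma\otimes\cld_C}$ in the first slot and $\hat\gamma_\Gamma(I_\Gamma\otimes(\sigma'_{\hat A})^{-1})$-type terms in the remaining slots collapses, via the zero in the first coordinate of $\sigma_{E'}D_{E'}|_{\mathcal{H}_C}$'s complement, to $\hat\gamma_\Gamma$ times the tail of $\mathcal{P}$, one recovers exactly the formula of Proposition \ref{prop second component} restricted to $\Gamma\otimes\mathcal{H}_{\hat A}$. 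Hence both restrictions of the stated right-hand side agree with $M_{C,E'}D_{E'_\Gamma}$ on the respective subspaces, and since $\Gamma\otimes\mathcal{H}_{E'}=(\Gamma\otimes\mathcal{H}_C)\oplus(\Gamma\otimes\mathcal{H}_{\hat A})$, the two operators coincide.

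The main obstacle is purely bookkeeping: making sure the unitaries $\sigma_{\hat A}$, $\sigma'_{\hat A}$, $\sigma_{E'}$ are consistently applied to the right liftings (the lemmas are stated for a generic lifting $\underline{E}$ of $\underline{C}$, and here one instantiates them with $\underline{\hat A}$ of $\underline{A}$ and $\underline{E'}$ of $\underline{C}$, with the abstract contraction $\delta$ of Lemma \ref{lemma isomtery} being played by $\delta$ in one case and by $\hat\gamma$ in the other), and checking that the block decomposition $\begin{bmatrix}I&0\\ 0&\text{(first-component factor)}\end{bmatrix}$ of Proposition \ref{prop firstcomponent} slots together with the factor from Proposition \ref{prop second component} into the single displayed product without sign or ordering errors. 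I would verify this by substituting the Proposition \ref{prop second component} expression for $M_{C,E'}D_{E'_\Gamma}|_{\Gamma\otimes\mathcal{H}_{\hat A}}$ into the block-diagonal appearing in the proof of Proposition \ref{prop firstcomponent}, exactly as is already carried out at the end of that proof, and then reading off that this combined expression is independent of the decomposition of $\mathcal{H}_{E'}$ used — i.e., it equals a single operator on all of $\Gamma\otimes\mathcal{H}_{E'}$. Finally I would pass to the SOT-limit $r\to1$ (using $M_{C,E'}=\mathrm{SOT}\text{-}\lim_{r\to1}M_{C,E'}(rR)$, $M_A=\mathrm{SOT}\text{-}\lim M_A(rR)$, $M_{A'}=\mathrm{SOT}\text{-}\lim M_{A'}(rR)$, together with the boundedness of all the fixed bounded operators in the product), exactly as in the proofs of the two propositions, to obtain the stated identity for $M_{C,E'}D_{E'_\Gamma}$ itself.
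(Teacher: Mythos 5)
Your proposal is correct and follows exactly the route the paper takes: the paper proves Theorem \ref{fact} simply by ``combining'' Propositions \ref{prop second component} and \ref{prop firstcomponent}, i.e.\ by noting that the displayed product restricts on $\Gamma\otimes\mathcal{H}_C$ and on $\Gamma\otimes\mathcal{H}_{\hat A}$ (via the two columns of $\sigma_{E'}D_{E'}$) to the formulas of those two propositions. Your more explicit bookkeeping of how the zero block in $\sigma_{E'}D_{E'}|_{\mathcal{H}_{\hat A}}$ collapses the row factor to $\hat\gamma_\Gamma(I_\Gamma\otimes(\sigma'_{\hat A})^{-1})$ is exactly the verification the paper leaves implicit.
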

\begin{remark}

	 For the symbol $\theta_{C,E'}=M_{C,E'}|_{e_0\otimes\cld_{E'}},$ the factorization of Theorem \ref{fact} is as follows
	\begin{align}\label{symbol}
	{\theta_{C,E'}}=&\begin{bmatrix}
	D_{*,\hat{\gamma}}&\hat{\gamma}_{\Gamma}
	\end{bmatrix}\begin{bmatrix}
	I_{{\cld}_C}&{0}\\
	0&I_{\Gamma}\otimes(\sigma'_{\hat{A}})^{-1}
	\end{bmatrix}
	\begin{bmatrix}
	I_{{\cld}_C}&0&0\\
	0&{\theta_{A}}&0\\
	0&0&I_{\Gamma\otimes{\cld}_{*,A'}}
	\end{bmatrix}\\
	&\begin{bmatrix}
	I_{{\cld}_C}&0&0\\
	0& D_{*,\delta}&I_{\Gamma}\otimes \delta\\
	0&-\delta^*& I_{\Gamma}\otimes D_{\delta}
	\end{bmatrix}
	\begin{bmatrix}
	I_{\cld_{C}}&0&0\\
	0&I_{{\cld}_{A}}&0\\
	0&0&{\theta_{A'}}
	\end{bmatrix}\begin{bmatrix}
	I_{{\cld}_{C}}&0\\
	0&\sigma_{\hat{A}}
	\end{bmatrix}\sigma_{E'}. \nonumber
\end{align}
\end{remark}
The Theorem \ref{fact}  is illustrated by the following example.

\begin{example}
	Let $C = \frac{1}{2},A=0$ and $A'=0$ be  contractions on the Hilbert spaces $\clh_{C}=\clh_{A}=\clh_{A'}=\mathbb{C}$ respectively. Consider the liftings $E$ of $C$ by $A$  and $E'$  of $E$ by $A'$ defined as follows
	\[E = \begin{bmatrix}
	C&0\\B&A
	\end{bmatrix} = \frac{1}{2}\begin{bmatrix}
	1 & 0\\
	1 & 0
	\end{bmatrix}\text{ and } E'= \begin{bmatrix}
	E&0\\
	B'&A'
	\end{bmatrix}
	= \frac{1}{2}\begin{bmatrix}
	1 & 0 & 0 \\
	1 & 0 & 0\\
	1 & 0 & 0 
	\end{bmatrix}\] Then $\clh_E=\mathbb{C}^2$ and $\clh_{E'}=\mathbb{C}^3$. 
	Since $E'$ is also a lifting for $C$, we have
	\[
	E' = \begin{bmatrix}
	C&0\\
	\hat{B}&\hat{A}
	\end{bmatrix},
	\text{
		where }\hat{B} = \begin{bmatrix}
	\frac{1}{2}\\
	\frac{1}{2}
	\end{bmatrix}\text{ and }
	\hat{A} = \begin{bmatrix}
	0 & 0\\
	0 & 0
	\end{bmatrix}.\] 
	Note that   $D_C = \frac{\sqrt{3}}{2}$, $D_{*, \hat{A}}=I_{\mathbb{C}^2}$  and 
	$	D_{E'} 
	= \begin{bmatrix}
	\frac{1}{2} & 0 & 0\\
	0 & 1 & 0\\
	0 & 0 & 1
	\end{bmatrix}.$
	{Using Proposition \ref{Prop lifting},  
		$({\hat{B}})^{*} = D_C\hat{\gamma} D_{*, \hat{A}}$ for some contraction $\hat{\gamma}\in B(\cld_{*,\hat{A}},\cld_{C})$. So,
		\begin{align*}
		\hspace{5pt}\begin{bmatrix}
		\frac{1}{2}&\frac{1}{2}\end{bmatrix}&= \frac{\sqrt{3}}{2}\hat{\gamma}I_{\mathbb{C}^2}
		\end{align*}
This implies 		$\hat{\gamma} =  \begin{bmatrix}
		\frac{1}{\sqrt{3}} & \frac{1}{\sqrt{3}}
		\end{bmatrix}$ and
		  $D_{*,\hat{\gamma}} = \frac{1}{\sqrt{3}}.$}
	Now using the equations (\ref{eqn dey characteristic funtion HC}) and (\ref{eqn dey characteristic function HA}), we have the following
	
	\begin{align*}
	\theta_{C,E'}(D_{E'})h_C &= \frac{1}{2 \sqrt{3}},\text{ for }h_C\in\clh_C,\\
	\theta_{C,E'}(D_{E'})h_{{\hat{A}}} &= e_1\otimes\frac{1}{\sqrt{3}}(h_A + h_{A'})\text{ for }h_{\hat{A}} = \begin{bmatrix}
	h_A\\h_{A'}
	\end{bmatrix}\in\clh_{\hat{A}}.
	\end{align*}
	Thus 	
	\[
	\theta_{C,E'}(z) = \begin{bmatrix}
	\frac{1}{\sqrt{3}} & \frac{z}{\sqrt{3}} & \frac{z}{\sqrt{3}}
	\end{bmatrix}.
	\]	
	For  $A=A'=0$, $\theta_{A}=\theta_{A'} = z.$ Moreover, { $\sigma_{\hat{A}}= \sigma'_{\hat{A}}=I_{\mathbb{C}^2}$ and 
		\begin{align*}
		\sigma_{E'}D_{E'} &= \begin{bmatrix}
		D_{*,\hat{\gamma}} D_C&0\\
		-\hat{A}^*\hat{\gamma}^* D_C& D_{\hat{A}}\end{bmatrix}\\
		&=\begin{bmatrix}
		\frac{1}{2}&0&0\\0&1&0\\0&0&1
		\end{bmatrix}=D_{E'}
		\end{align*} 
		Thus $\sigma_{E'}=I_{\mathbb{C}^3}.$}
	Therefore the right hand side of Equation  (\ref{symbol}) is 
	\begin{align*}
	&\begin{bmatrix}
	\frac{1}{\sqrt{3}}&
	\frac{1}{\sqrt{3}}&\frac{1}{\sqrt{3}}
	\end{bmatrix} \begin{bmatrix}
	1&0&0\\0&z&0\\0&0&1
	\end{bmatrix}\begin{bmatrix}
	1&0&0\\0&1&0\\0&0&z
	\end{bmatrix}\\
	&=\begin{bmatrix}
	\frac{1}{\sqrt{3}} & \frac{z}{\sqrt{3}} & \frac{z}{\sqrt{3}}
	\end{bmatrix}.
	\end{align*}
\end{example}

The following is the converse of Theorem \ref{fact}.

\begin{theorem}
	Let $\clh_C$, $\clh_A$, $\clh_{A'}$, $\clf$ and  $\clf_*$ be Hilbert spaces. Suppose $\underline{C} = (C_1,C_2,...,C_n)$, $\underline{A} = (A_1,A_2,...,A_n)$ and $\underline{A'} = (A'_1,A'_2,...,A'_n)$ are row contractions on the Hilbert spaces $\clh_C$, $\clh_A$ and $\clh_{A'}$ respectively. Let $\lambda$ be a contraction from $\cld_{A}\oplus\clf_{*}$ to $\cld_{C}$ and $U$ be a unitary map from $\clf\oplus\cld_{*,A'}$ to $\cld_{A}\oplus\clf_{*}$. Define 
	\[
	\underline{\hat{A}} := \begin{bmatrix}
		\underline{A} & 0\\
		\underline{B} & \underline{A'}
	\end{bmatrix},
	\]
	where  $\underline{B} = D_{*,A'}(P_{\cld_{*A'}}{U}^{*}|_{\cld_{A}})D_{A}$. Let
	\begin{align*}
		{\tilde{\clm}} &= \begin{bmatrix}
			{M_{A}} & 0\\
			0 & I_{\Gamma\otimes{\clf}_*}
		\end{bmatrix}
		(I_{\Gamma}\otimes {U})
		\begin{bmatrix}
			I_{\Gamma\otimes\clf} & 0\\
			0 & {M_{A'}}
		\end{bmatrix}\\
		{\clm} &= \begin{bmatrix}
			I_{\Gamma}\otimes D_{*,{\lambda}} & I_{\Gamma}\otimes{\lambda}
		\end{bmatrix} 
		\begin{bmatrix}
			I_{\Gamma\otimes\mathcal{D}_C}  & 0 & 0\\
			0 & {M_{A}} & 0\\
			0 & 0 & I_{\Gamma\otimes{\clf}_*}
		\end{bmatrix}
		\begin{bmatrix}
		I_{\Gamma\otimes\mathcal{D}_C} & 0\\
		0 & (I_{\Gamma}\otimes {U})
		\end{bmatrix}
		\begin{bmatrix}
			I_{\Gamma\otimes\mathcal{D}_C} & 0 & 0\\
			0 & I_{\Gamma\otimes\clf} & 0\\
			0 & 0 & {M_{A'}}
		\end{bmatrix}.
	\end{align*}
	If ${\tilde{\clm}}$ is a purely contractive multi-analytic operator, then there exist a unitary operator $\phi:\cld_{*,\hat{A}}\rightarrow\cld_{*,A}\oplus\clf_{*}$ such that the characteristic function ${M_{C,E'}}$ of the lifting
	$\underline{E'} = \begin{bmatrix}
		\underline{C} & 0\\
		D_{*,\hat{A}}{\phi}^{*}{\lambda}^{*}D_C & \underline{\hat{A}}
	\end{bmatrix} $   coincides with  ${\clm}$.
\end{theorem}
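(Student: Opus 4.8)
The plan is to reverse-engineer the factorization of Theorem~\ref{fact}, reading the given data $(\underline{C},\underline{A},\underline{A'},\lambda,U)$ as the constituent pieces that appear there. First I would extract from $U$ the contraction $\delta:=P_{\cld_{A}}U|_{\cld_{*,A'}}$, the $(1,2)$-block of $U$; equivalently $\delta^{*}=P_{\cld_{*,A'}}U^{*}|_{\cld_{A}}$, so that $\underline{B}=D_{*,A'}\delta^{*}D_{A}$. By Proposition~\ref{Prop lifting}, $\underline{\hat A}$ is then a contractive lifting of $\underline{A}$ by $\underline{A'}$ with associated contraction $\delta$; likewise, for any unitary $\phi$, the tuple $\underline{E'}=\begin{bmatrix}\underline{C}&0\\ D_{*,\hat A}\phi^{*}\lambda^{*}D_{C}&\underline{\hat A}\end{bmatrix}$ is a contractive lifting of $\underline{C}$ by $\underline{\hat A}$ with associated contraction $\hat\gamma:=\lambda\phi:\cld_{*,\hat A}\to\cld_{C}$, since $\lambda$ is a contraction and $\phi$ a unitary.

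The crucial step is to decode the unitary $U$. Writing $U=\begin{bmatrix}U_{11}&\delta\\ U_{21}&U_{22}\end{bmatrix}$, unitarity gives $U_{11}U_{11}^{*}=D_{*,\delta}^{2}$, $U_{22}^{*}U_{22}=D_{\delta}^{2}$ and $\|U_{21}f\|=\|f\|$ for $f\in\ker U_{11}$. I would argue that the hypothesis that $\tilde{\clm}$ is purely contractive forces $\ker U_{11}=\{0\}$: since $M_{A}$ and $M_{A'}$ are purely contractive multi-analytic operators, a nonzero vector lying in $\Gamma\otimes\ker U_{11}$ inside the first summand of the domain of $\tilde{\clm}$ would be mapped isometrically by $\tilde{\clm}$. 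From $\ker U_{11}=\{0\}$ and $U_{11}U_{11}^{*}=D_{*,\delta}^{2}$ one gets that $U_{11}$ is injective with dense range $\cld_{*,\delta}$, hence $\clf\cong\cld_{*,\delta}$, and a dimension count using unitarity of $U$ together with the standard relation between $\dim\cld_{\delta}$ and $\dim\cld_{*,\delta}$ gives $\clf_{*}\cong\cld_{\delta}$. By the uniqueness of the Julia--Halmos unitary of a contraction, there are unitaries $w:\clf\to\cld_{*,\delta}$ and $w_{*}:\cld_{\delta}\to\clf_{*}$ under which $U$ becomes exactly the Julia--Halmos matrix $J_{\delta}=\begin{bmatrix}D_{*,\delta}&\delta\\ -\delta^{*}&D_{\delta}\end{bmatrix}$.

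Next I would set $\phi$ to be the composite of the unitary $\sigma'_{\hat A}:\cld_{*,\hat A}\to\cld_{*,A}\oplus\cld_{\delta}$ of Lemma~\ref{lemma adjoint isometry} (for $\underline{\hat A}$) with the identification $\cld_{*,A}\oplus\cld_{\delta}\cong\cld_{*,A}\oplus\clf_{*}$ given by $w_{*}$; then $\hat\gamma=\lambda\phi$, which fixes $\underline{E'}$. One then checks that $\underline{E'}$ is a minimal (equivalently reduced) lifting of $\underline{C}$ — that is, $\underline{\hat A}$ is completely non-coisometric and $\hat\gamma$ is resolving — again a consequence of the purely contractive hypothesis on $\tilde{\clm}$, so that $M_{C,E'}$ is defined. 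It remains to apply Theorem~\ref{fact} to $\underline{E'}$: substituting the identification of $U$ with $J_{\delta}$, the relation $\sigma_{E'}D_{E'}=\begin{bmatrix}D_{*,\hat\gamma}D_{C}&0\\ -\hat A^{*}\hat\gamma^{*}D_{C}&D_{\hat A}\end{bmatrix}$ from Lemma~\ref{lemma isomtery}, and the fact that $\hat\gamma(\sigma'_{\hat A})^{-1}$ becomes $\lambda$ under the identification $\cld_{\delta}\cong\clf_{*}$, the right-hand side of Theorem~\ref{fact} collapses factor by factor onto $\clm$, yielding $M_{C,E'}=\clm$.

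The principal obstacle is exactly the decoding of $U$ in the second step: showing cleanly that pure contractivity of $\tilde{\clm}$ is precisely what makes the auxiliary spaces $\clf,\clf_{*}$ minimal and identifies $U$ with $J_{\delta}$, and at the same time forces $\underline{E'}$ to be a minimal lifting, so that the domain $\Gamma\otimes\cld_{E'}$ of $M_{C,E'}$ matches the domain $\Gamma\otimes(\cld_{C}\oplus\clf\oplus\cld_{A'})$ of $\clm$. This hinges on a careful analysis of when a composition of the purely contractive multi-analytic operators $M_{A},M_{A'}$ with the constant unitary $U$ fails to be purely contractive; once this is settled the remainder is a bookkeeping comparison with Theorem~\ref{fact}.
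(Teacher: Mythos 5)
Your overall route is the same as the paper's: block-decompose $U$, use pure contractivity of $\tilde{\clm}$ to eliminate the ``excess'' parts of $\clf$ and $\clf_*$, identify $U$ with the Julia--Halmos matrix of $\delta$ conjugated by block-diagonal unitaries, and then feed the resulting data ($\hat{\gamma}=\lambda\phi$, $\phi$ built from the unitary of Lemma \ref{lemma adjoint isometry}) into Theorem \ref{fact}. Two of your justifications, however, do not stand as written. First, ``mapped isometrically by $\tilde{\clm}$'' is not by itself incompatible with pure contractivity (inner multi-analytic operators are isometric); what the computation actually yields, and what you need, is that for $f'\in\ker U_{11}$ the constant $e_0\otimes f'$ is sent to the constant $e_0\otimes U_{21}f'$ of the same norm, so the zeroth Fourier coefficient of $\tilde{\clm}$ fails to be a pure contraction unless $f'=0$. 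Second, and more seriously, the ``dimension count'' purporting to give $\clf_*\cong\cld_\delta$ is not a valid argument: in infinite dimensions, equality of Hilbert dimensions tells you nothing about whether $\overline{U_{22}\,\cld_{*,A'}}=\clf_*$, i.e.\ whether $\ker U_{22}^*=\{0\}$, and without that you cannot construct the unitary $w_*$ intertwining $U_{22}$ with $D_\delta$, so the Julia--Halmos identification of $U$ breaks down at exactly the point you flag as the principal obstacle.

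The paper closes this gap by a short unitarity computation showing $U\bigl(\clf\ominus\overline{U_{11}^*\cld_A}\bigr)=\clf_*\ominus\overline{U_{22}\,\cld_{*,A'}}$, so the pure-contractivity argument annihilates both orthogonal complements simultaneously; the unitaries $w:\clf\to\cld_{*,\delta}$ and $w_*:\cld_\delta\to\clf_*$ are then obtained not from an abstract isomorphism but from the norm identities $\|U_{11}^*x\|=\|D_{*,\delta}x\|$ and $\|U_{22}y\|=\|D_\delta y\|$ (consequences of $UU^*=U^*U=I$ and Proposition \ref{Prop lifting}), and the off-diagonal entry $-\delta^*$ of the middle factor is forced by unitarity of the resulting $2\times 2$ block matrix. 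With these repairs your remaining steps (the choice of $\phi$ and the factor-by-factor comparison) coincide with the paper's computation, which concludes that $\clm$ equals $M_{C,E'}$ composed on the right with a constant unitary identifying $\cld_{E'}$ with $\cld_C\oplus\clf\oplus\cld_{A'}$; note also that the paper does not separately verify minimality of $\underline{E'}$, so your proposed extra check, while prudent, is not part of its argument.
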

\begin{proof}
	Let ${U}^{*} := \begin{bmatrix}
		P & Q\\
		R & S
	\end{bmatrix},$ where
	${P}\in\clb(\cld_{A},\clf)$, ${Q}\in\clb(\clf_{*},\clf)$, ${R}\in\clb(\cld_{A},\cld_{*,A'})$ and ${S}\in\clb(\clf_{*}, \cld_{*,A'})$.
	Define $\clf^{'} := \clf \ominus {P}\cld_{A}$ and ${\clf}'_{*} := \clf_{*}\ominus {S}^{*}\cld_{*,A'} $.
	
	 We claim that ${U}\clf^{'} = {\clf}'_{*}$. Let $f'_{*}\in{\clf}'_{*}$. Then by definition of ${\clf}'_{*}$, we have 
	\[
	\langle {S}f'_{*}, D_{*,A'}h\rangle = \langle f'_{*}, {S}^{*}D_{*,A'}h\rangle = 0,\,\forall\,h\in \mathcal{H}_{A'}.
	\]
	Thus ${S}f'_{*} = 0$ and ${U}^{*}f'_{*} =  {Q}f'_{*}$. Using the fact that ${U}$ is unitary, we have
	\begin{equation}\label{eqn1}
		f'_{*} = {U}{U}^{*}f'_{*} = {Q}^{*}{Q}f'_{*}.
	\end{equation}
	If we denote $f = {Q}f'_{*}$, then by Equation (\ref{eqn1}), we have
${Q}{Q}^{*}f = {Q}{Q}^{*}{Q}f'_{*}
		= {Q}f'_{*} = f$.
	Since $U$ is a unitary operator, we have 
	$$\|f\|^{2} = \|{U}f\|^{2} = \|{P}^{*}f\|^2 + \|{Q}^*f\|^2 .$$ 
	This implies ${P}^{*}f = 0$ and consequently $f\perp {P}\cld_{A}$. Therefore, ${U}^*{\clf}'_{*}={Q}{\clf}'_{*}\subseteq {\clf}'.$
	Using similar arguments, we have 
	${U}{\clf}'\subseteq{\clf}'_{*}.$
	This proves our claim, that ${U}{\clf}' = {\clf}'_{*}$.
	
	For $f'\in{\clf}'$, we have
	\[
	{\tilde{\clm}}(e_{0}\otimes f') = \begin{bmatrix}
	{M_{A}} & 0\\
	0 & I_{\Gamma\otimes{\clf}_*}
	\end{bmatrix}
	(I_{\Gamma}\otimes {U})
	\begin{bmatrix}
	I_{\Gamma\otimes\clf} & 0\\
	0 & {M_{A'}}
	\end{bmatrix}\\ (e_0\otimes f'),
	\]
	and
	\[
	\|P_{e_0\otimes(\cld_{*,A}\oplus\clf_{*})}{\tilde{\clm}}(e_0\otimes f')\|^2 = \|e_0\otimes {U}f'\|^2 = \|f'\|^2.
	\]
	Since ${\tilde{\clm}}$ is purely contractive, we get $f' = 0$. Therefore ${\clf}' = \{0\}$ and ${\clf}'_{*} = {U}{\clf}' = \{0\}$. Hence,
$\overline{{P}\cld_{A}}  = \clf  \text{ and } \overline{{S}^{*}\cld_{*,A'}} = {\clf_{*}}.$
		For $x\in\cld_{A}$, 
		$$\|x\|^2=\|{U}^*x\|^2=\|{P}x\|^2+\|{R}x\|^2.$$ In other words, $\|x\|^2-\|{R}x\|^2=\|{P}x\|^2$, equivalently $\|D_{R}x\|=\|{P}x\|$.
	Similarly, for $y\in\cld_{*,A'}$ we have $\|D_{*,{R}}y\| = \|{S}^{*}y\|$.
	Thus we can define the isometries ${U_1}$ and ${U_2}$  by
	\[
	{U_1}({P}x) = D_{R} x \text{ and } {U_2}({S}^{*}y) = D_{*,{R}} y.
	\]
	Since $\overline{{P}\cld_{A}} = \clf$ and $\overline{{S}^{*}\cld_{*,A'}} = \clf_{*}$, we can extend the isometries ${U_1}$ and ${U_2}$  to unitary operators from $\clf$ to $\cld_{{R}}$ and $\clf_{*}$ to $\cld_{*,{R}}$, respectively. Note that $	{P}^{*} = D_{R}|_{\cld_{{R}}}{U_1}$ and ${S}^*={U_2}^*D_{*,{R}}$. Thus
	\begin{align}\label{eqn6}
		{U} = \begin{bmatrix}
			P^{*} & {R}^{*}\\
			Q^{*} & {S}^{*}
		\end{bmatrix} 
		= \begin{bmatrix}
			D_{R}|_{{\cld}_{R}}{U_1} & {R}^{*}\\
			{U_2}^{*}{Q}_1 {U_1} & {U_2}^*D_{*,{R}}
		\end{bmatrix} =
		\begin{bmatrix}I_{\cld_{A}} & 0\\0 & {U_2}^{*} \end{bmatrix}
		\begin{bmatrix}D_{R}|_{{\cld}_{R}} & {R}^{*}\\ {Q}_1 & D_{*,{R}}\end{bmatrix}
		\begin{bmatrix}{U_1} & 0\\ 0 & I_{\cld_{*,A'}}\end{bmatrix},
	\end{align}
	where ${Q}_1 = {U_2}{Q}^{*}{U_1}^{*}\in\clb(\cld_{{R}},\cld_{*,{R}})$. In Equation (\ref{eqn6}), ${U},\,{U_2},\,{U_1}$ are unitaries, which implies the middle matrix is also unitary
	\[\begin{bmatrix}
		D_{R}^2+{Q}_1^*{Q}_1&D_{R}{R}^*+{Q}_1^*D_{*,{R}}\\
		{R}D_{R}+D_{*,{R}}{Q}_1&{R}{R}^*+D_{*,{R}}^2
	\end{bmatrix}=\begin{bmatrix}
	I_{\mathcal{D}_{R}}&0\\
	0&I_{\mathcal{D}_{*,{R}}}
\end{bmatrix}.\]
Therefore, ${R}D_{R}+D_{*,{R}}{Q}_1=0$. Equivalently, $D_{*,{R}}({R}+{Q}_1)=0$ or $\overline{\text{range}}({R}+{Q}_1)\subseteq\text{Null}(D_{*,{R}})$. But ${Q}_1(\mathcal{D}_{R})\subseteq\mathcal{D}_{*,{R}}$ and ${R}(\mathcal{D}_{R})\subseteq\mathcal{D}_{*,{R}}$, thus $\overline{\text{range}}({Q}_1+{R}|_{\mathcal{D}_{R}})\subseteq\mathcal{D}_{*,{R}}=(\text{Null}(D_{*,{R}}))^{\perp}$. Consequently ${Q}_1=-{R}|_{\mathcal{D}_{R}}.$ \\
Let 
	\[
	\textit{u} := \begin{bmatrix}{U_1} & 0\\ 0 & I_{\cld_{*,A'}}\end{bmatrix},\,
	J := \begin{bmatrix}D_{R}|_{{\cld}_{R}} & {R}^{*}\\ -{R}{|_{{\cld}_{R}}} & D_{*,{R}}\end{bmatrix}\text{ and }
	\textit{v} := \begin{bmatrix}I_{\cld_{A}} & 0\\0 & {U_2} \end{bmatrix},
	\]
	then ${U} = {\textit{v}}^{*}J\textit{u}$. Also, take
	\[
	\textit{u}' = \begin{bmatrix}{U_1} & 0\\ 0 & I_{\cld_{A}}\end{bmatrix}\;\text{and}\;
	\textit{v}' = \begin{bmatrix}I_{\cld_{*,A}} & 0\\0 & {U_2} \end{bmatrix}.
	\]
	Then
	\begin{align}
		(I_{\Gamma}\otimes\textit{v}')
		\begin{bmatrix}
		{M_{A}} & 0\\
		0 & I_{\Gamma\otimes{\clf}_*}
		\end{bmatrix} =& \begin{bmatrix}
		{M_{A}} & 0\\
		0 & I_{\Gamma\otimes\mathcal{D}_{*,{R}}}
		\end{bmatrix}(I_{\Gamma}\otimes\textit{v}), \text{ and }\\
		(I_{\Gamma}\otimes\textit{u})
		\begin{bmatrix}
		I_{\Gamma\otimes\clf} & 0\\
		0 & {M_{A'}}
		\end{bmatrix} =& \begin{bmatrix}
		I_{\Gamma\otimes\mathcal{D}_{R}} & 0\\
		0 & {M_{A'}}
		\end{bmatrix}(I_{\Gamma}\otimes\textit{u}').
	\end{align}
We have
	\begin{align*}
		(I_{\Gamma}\otimes\textit{v}'){\tilde{\clm}}(I_{\Gamma}\otimes{\textit{u}}'^{*})& = (I_{\Gamma}\otimes\textit{v}')
		\begin{bmatrix}
		{M_{A}} & 0\\
		0 & I_{\Gamma\otimes{\clf}_*}
		\end{bmatrix}(I_{\Gamma}\otimes {U})\begin{bmatrix}
		I_{\Gamma\otimes\clf} & 0\\
		0 & {M_{A'}}
		\end{bmatrix}(I_{\Gamma}\otimes{\textit{u}}'^{*})\\
		&= \begin{bmatrix}
		{M_{A}} & 0\\
		0 & I_{\Gamma\otimes\mathcal{D}_{*,{R}}}
		\end{bmatrix}(I_{\Gamma}\otimes\textit{v})(I_{\Gamma}\otimes {U})(I_{\Gamma}\otimes\textit{u}^*)\begin{bmatrix}
		I_{\Gamma\otimes\mathcal{D}_{R}} & 0\\
		0 & {M_{A'}}
		\end{bmatrix}\\
	&=\begin{bmatrix}
	{M_{A}} & 0\\
	0 & I_{\Gamma\otimes\mathcal{D}_{*,{R}}}
	\end{bmatrix}(I_{\Gamma}\otimes J)\begin{bmatrix}
	I_{\Gamma\otimes\mathcal{D}_{R}} & 0\\
	0 & {M_{A'}}
	\end{bmatrix}.
	\end{align*}
From this, we have
	\begin{align*}
		{\clm} =& \begin{bmatrix}
			I_{\Gamma}\otimes D_{*,{\lambda}} & I_{\Gamma}\otimes{\lambda}
		\end{bmatrix} 
		\begin{bmatrix}
			I_{\Gamma\otimes\mathcal{D}_C} & 0\\
			0 & {\tilde{\clm}}
		\end{bmatrix}\\
	=&\begin{bmatrix}
		I_{\Gamma}\otimes D_{*,{\lambda}} & I_{\Gamma}\otimes{\lambda}
	\end{bmatrix} \begin{bmatrix}
		I_{\Gamma\otimes\mathcal{D}_C}&0\\
		0&I_{\Gamma}\otimes {v'}^*
	\end{bmatrix}
		\begin{bmatrix}
			I_{\Gamma\otimes\mathcal{D}_C}&0&0\\
			0&{M_{A}}&0\\
			0&0&I_{\Gamma\otimes\mathcal{D}_{*,{R}}}
		\end{bmatrix}\begin{bmatrix}
		I_{\Gamma\otimes\mathcal{D}_C}&0\\
		0&I_{\Gamma}\otimes J
	\end{bmatrix}
		\begin{bmatrix}
			I_{\Gamma\otimes\mathcal{D}_C} & 0&0\\
			0&I_{\Gamma\otimes\mathcal{D}_{R}}&0\\
			0 & 0&{M_{A'}}
		\end{bmatrix}\\
	&		\begin{bmatrix}
			I_{\Gamma\otimes\mathcal{D}_C} & 0\\
			0 & I_{\Gamma}\otimes\textit{u}'
		\end{bmatrix}\\
	=&\begin{bmatrix}
		I_{\Gamma}\otimes D_{*,{\lambda}} & I_{\Gamma}\otimes{\lambda} {v'}^*\sigma'_{\hat{A}}
	\end{bmatrix} \begin{bmatrix}
		I_{\Gamma\otimes\mathcal{D}_C}&0\\
		0&I_{\Gamma}\otimes (\sigma'_{\hat{A}})^{-1}
	\end{bmatrix}
	\begin{bmatrix}
		I_{\Gamma\otimes\mathcal{D}_C}&0&0\\
		0&{M_{A}}&0\\
		0&0&I_{\Gamma\otimes\mathcal{D}_{*,{R}}}
	\end{bmatrix}\begin{bmatrix}
		I_{\Gamma\otimes\mathcal{D}_C}&0\\
		0&I_{\Gamma}\otimes J
	\end{bmatrix}\\
	&\begin{bmatrix}
		I_{\Gamma\otimes\mathcal{D}_C} & 0&0\\
		0&I_{\Gamma\otimes\mathcal{D}_{R}}&0\\
		0 & 0&{M_{A'}}
	\end{bmatrix}
\begin{bmatrix}
	I_{\Gamma\otimes\mathcal{D}_C}&0\\
	0&I_{\Gamma}\otimes \sigma_{\hat{A}}
\end{bmatrix}(I_{\Gamma}\otimes\sigma_{E'})\left(I_{\Gamma}\otimes(\sigma_{E'})^{-1}\right)
	\begin{bmatrix}
		I_{\Gamma\otimes\mathcal{D}_C} & 0\\
		0 & I_{\Gamma}\otimes(\sigma_{\hat{A}})^{-1}\textit{u}'
	\end{bmatrix}\\
		 =& {M_{C,E'}}\left(I_{\Gamma}\otimes\sigma_{E'}^{-1}\right)\begin{bmatrix}
			I_{\Gamma\otimes\mathcal{D}_C} & 0\\
			0 & I_{\Gamma}\otimes(\sigma_{\hat{A}})^{-1}\textit{u}'
		\end{bmatrix},
	\end{align*}
where \begin{align*}\underline{E'}&=\begin{bmatrix}
	\underline{C}&0\\
	D_{*,\hat{A}}\phi^*{\lambda}^*D_C&\underline{\hat{A}}
\end{bmatrix},\,{\underline{\hat{A}}=\begin{bmatrix}
	\underline{A}&0\\D_{*A'}R^*D_A&\underline{A}'
	\end{bmatrix}},\end{align*}
and $\phi:={v'}^*\sigma'_{\hat{A}}:\mathcal{D}_{*,\hat{A}}\rightarrow \mathcal{D}_{*,A}\oplus\mathcal{F}_*.$
Hence ${\clm}$ coincides with ${M_{C,E'}}$.
\end{proof}

\section{Characteristic function of an iterated lifting}

In this section, we investigate the factorization properties of the characteristic function of  the minimal part of an iterated lifting. Let $\underline{E}$ be a minimal contractive lifting of a row contraction $\underline{C}$ and
$\underline{E}'$ be a minimal contractive lifting of $\underline{E}$. Define
\begin{equation}\label{eqn minimal cont}
	\clh_{\tilde{E}} := \overline{span}\{{E'_{\alpha}}x: x\in\clh_C,\,\alpha\in\tilde{\Lambda}\} \;\;\text{and}\;\; \tilde{\underline{E}} = {\underline{E}}'|_{\clh_{\tilde{E}}}.
\end{equation}
Consider $\clh_{\tilde{A}} = \clh_{E'}\ominus\clh_{\tilde{E}}$, and define
$\underline{\tilde{X}} = P_{\clh_{\tilde{E}}}\underline{E}'|_{\clh_{\tilde{A}}}$ and 
$\underline{\tilde{A}} = P_{\clh_{\tilde{A}}}\underline{E}'|_{\clh_{\tilde{A}}}$. Then
\[
\underline{E}' = \begin{bmatrix}
	\underline{\tilde{E}} & \underline{\tilde{X}}\\
	\underline{0} & \underline{\tilde{A}} 
\end{bmatrix}.
\]
As $\underline{E}'$ is a row contraction, we have
\begin{equation} \label{gamma}
\tilde{\underline{X}} = D_{*,\tilde{E}}\gamma D_{\tilde{A}}
\end{equation}
for some contraction $ \gamma\in\clb(\cld_{\tilde{A}},\cld_{*,\tilde{E}})$. 
By Nagy-Foias (see \cite{NF}) there exist a unitary 
$\sigma: \cld_{E'} \rightarrow \cld_{\tilde{E}}\oplus\cld_{\gamma}$, such that
\begin{equation} \label{sigma}
\sigma D_{E'} = \begin{bmatrix}
	D_{\tilde{E}} & -\tilde{E}^{*}\gamma D_{\tilde{A}}\\
	0 & D_{\gamma}D_{\tilde{A}}
\end{bmatrix}.
\end{equation}


\begin{theorem}\label{theorem product of characteristic function}
	Let 
$\underline{E}$ be a minimal contractive lifting of a row contraction $\underline{C}$ and
$\underline{E}'$ be a minimal contractive lifting of $\underline{E}$. If $\clh_{\tilde{E}}$ and $\underline{\tilde{E}}$ are as defined in Equation \ref{eqn minimal cont},
then $\tilde{\underline{E}}$ is a minimal contractive lifting of $\underline{C}$ and $M_{C,\tilde{E}} =    M_{C,E}M_{E,E'}(I_{\Gamma}\otimes\sigma^{-1}|_{\cld_{\tilde{E}}})$.  
\end{theorem}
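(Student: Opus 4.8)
The first assertion is elementary. The space $\clh_{\tilde{E}}$ contains $\clh_C$ (put $\alpha=0$ in~(\ref{eqn minimal cont})) and is invariant under each $E_i'$, since $E_i'E_\alpha'=E_{i\alpha}'$; hence $\underline{\tilde{E}}=\underline{E}'|_{\clh_{\tilde{E}}}$ is a row contraction and $E_i'h=\tilde{E}_ih$ for every $h\in\clh_{\tilde{E}}$. As $\clh_C$ is co-invariant for $\underline{E}$ (because $\underline{E}$ lifts $\underline{C}$) and $\clh_E$ is co-invariant for $\underline{E}'$ (because $\underline{E}'$ lifts $\underline{E}$), $\clh_C$ is co-invariant for $\underline{E}'$, hence for $\underline{\tilde{E}}$, with $P_{\clh_C}\tilde{E}_i|_{\clh_C}=C_i$; together with $\clh_{\tilde{E}}=\overline{span}\{\tilde{E}_\alpha x:x\in\clh_C,\ \alpha\in\tilde{\Lambda}\}$ this shows $\underline{\tilde{E}}$ is a minimal contractive lifting of $\underline{C}$. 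For the factorization, write $\underline{V}^{\tilde{E}},\underline{V}^{E'},\underline{V}^{E},\underline{V}^{C}$ for the minimal isometric dilations in the realization recalled in Section~2, let $\tilde{W}:\hat{\clh}_{\tilde{E}}\to\hat{\clh}_C\oplus\tilde{\clk}$, $W_1:\hat{\clh}_{E'}\to\hat{\clh}_E\oplus\clk_1$, $W_0:\hat{\clh}_E\to\hat{\clh}_C\oplus\clk_0$ be the unitaries of~(\ref{char uni}) attached to the liftings $\underline{\tilde{E}}/\underline{C}$, $\underline{E}'/\underline{E}$, $\underline{E}/\underline{C}$, and set $W:=(W_0\oplus I_{\clk_1})W_1$, a unitary with $W|_{\clh_C}=I_{\clh_C}$ intertwining $\underline{V}^{E'}$ with $\underline{V}^{C}\oplus\underline{Y}$ for some row isometry $\underline{Y}$.

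The heart of the argument is to build a single isometric intertwiner $\Psi:\hat{\clh}_{\tilde{E}}\to\hat{\clh}_{E'}$ with $\Psi|_{\clh_{\tilde{E}}}$ the inclusion $\clh_{\tilde{E}}\hookrightarrow\clh_{E'}$ and $\Psi|_{\Gamma\otimes\cld_{\tilde{E}}}=I_\Gamma\otimes\sigma^{-1}|_{\cld_{\tilde{E}}}$. The point is the identity extracted from~(\ref{sigma}): its first block-column is $\sigma D_{E'}|_{\clh_{\tilde{E}}}$, equal to $D_{\tilde{E}}$ in the $\cld_{\tilde{E}}$-summand and $0$ in the $\cld_{\gamma}$-summand, so $(\sigma^{-1}|_{\cld_{\tilde{E}}})(D_{\tilde{E}})_ih=(D_{E'})_ih$ for $h\in\clh_{\tilde{E}}$ (regarded inside $\clh_{E'}$). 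In particular $\sigma^{-1}|_{\cld_{\tilde{E}}}$ is a well-defined isometry $\cld_{\tilde{E}}\to\cld_{E'}$ and $\Psi$ is a well-defined isometry. Using $E_i'h=\tilde{E}_ih$ on $\clh_{\tilde{E}}$, this identity, and the explicit formulas for $\underline{V}^{\tilde{E}}$ and $\underline{V}^{E'}$, one verifies $\Psi V_i^{\tilde{E}}=V_i^{E'}\Psi$ for all $i$: on $\clh_{\tilde{E}}$ both sides equal $\tilde{E}_ih\oplus e_0\otimes(D_{E'})_ih$, and on $\Gamma\otimes\cld_{\tilde{E}}$ both act as $e_\alpha\otimes d\mapsto e_{i\alpha}\otimes(\sigma^{-1}|_{\cld_{\tilde{E}}})d$; also $\Psi|_{\clh_C}=I$. (A more computational route, comparing the two sides on $e_0\otimes\cld_{\tilde{E}}$ via the symbol formulas~(\ref{eqn dey characteristic funtion HC})--(\ref{eqn dey characteristic function HA}), is possible but requires first identifying the internal data of $\underline{\tilde{E}}$ as a lifting of $\underline{C}$, which is messier.)

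Consequently $W\Psi:\hat{\clh}_{\tilde{E}}\to\hat{\clh}_C\oplus\clk_0\oplus\clk_1$ is an isometric intertwiner from $\underline{V}^{\tilde{E}}$ to $\underline{V}^{C}\oplus\underline{Y}$ fixing $\clh_C$. Both $W\Psi$ and $\tilde{W}$ send the reducing subspace $\overline{span}\{V_\alpha^{\tilde{E}}x:x\in\clh_C\}$ of $\underline{V}^{\tilde{E}}$ onto the copy of $\hat{\clh}_C$ by the canonical identification (minimality of $\underline{V}^{C}$ over $\clh_C$), and both carry its orthogonal complement into the complement of $\hat{\clh}_C$; hence $P_{\hat{\clh}_C}W\Psi=P_{\hat{\clh}_C}\tilde{W}$. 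Since $\Gamma\otimes\cld_C\subseteq\hat{\clh}_C$ this gives $M_{C,\tilde{E}}=P_{\Gamma\otimes\cld_C}\tilde{W}|_{\Gamma\otimes\cld_{\tilde{E}}}=P_{\Gamma\otimes\cld_C}W\Psi|_{\Gamma\otimes\cld_{\tilde{E}}}$. To unwind this, note $P_{\Gamma\otimes\cld_C}W=P_{\Gamma\otimes\cld_C}W_0P_{\hat{\clh}_E}W_1$ because $\clk_1\perp\hat{\clh}_C$; moreover $\Gamma\otimes\cld_{E'}\perp\clh_{E'}\supseteq\clh_E$ together with $W_1|_{\clh_E}=I$ forces $P_{\hat{\clh}_E}W_1|_{\Gamma\otimes\cld_{E'}}$ to take values in $\hat{\clh}_E\ominus\clh_E=\Gamma\otimes\cld_E$, so it equals $M_{E,E'}$; combining this with $\Psi|_{\Gamma\otimes\cld_{\tilde{E}}}=I_\Gamma\otimes\sigma^{-1}|_{\cld_{\tilde{E}}}$ and $P_{\Gamma\otimes\cld_C}W_0|_{\Gamma\otimes\cld_E}=M_{C,E}$ yields $M_{C,\tilde{E}}=M_{C,E}M_{E,E'}(I_\Gamma\otimes\sigma^{-1}|_{\cld_{\tilde{E}}})$.

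The main obstacle is the middle step: checking that $\Psi$ really is an isometric intertwiner. Everything there reduces to reading the relation $(\sigma^{-1}|_{\cld_{\tilde{E}}})(D_{\tilde{E}})_i=(D_{E'})_i|_{\clh_{\tilde{E}}}$ off of~(\ref{sigma}), which is exactly what makes the Julia--Halmos-type unitary $\sigma$ absorb the discrepancy between the defect spaces $\cld_{\tilde{E}}$ and $\cld_{E'}$ and keeps the $\Gamma\otimes\cld$-parts of the two dilations aligned; the comparison $P_{\hat{\clh}_C}W\Psi=P_{\hat{\clh}_C}\tilde{W}$ is then a routine appeal to uniqueness of the minimal isometric dilation of $\underline{C}$ sitting inside each of $\hat{\clh}_{\tilde{E}}$ and $\hat{\clh}_{E'}$.
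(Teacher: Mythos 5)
Your proposal is correct and follows essentially the same route as the paper: the authors also form the composite unitary $Z=(W_1\oplus I_{\mathcal{K}_2})W_2$ (your $W$), embed $\hat{\mathcal{H}}_{\tilde{E}}$ into $\hat{\mathcal{H}}_{E'}$ via $I_{\mathcal{H}_{\tilde{E}}}\oplus(I_{\Gamma}\otimes\sigma^{-1}|_{\mathcal{D}_{\tilde{E}}})$ (your $\Psi$), verify the intertwining with $V^{\tilde{E}}$ exactly through the relation $\sigma^{-1}(D_{\tilde{E}})_ih=(D_{E'})_ih$ read off from the Julia--Halmos-type identity, and then read off $M_{C,\tilde{E}}$ from the composite. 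The only cosmetic differences are that you spell out the justification of $P_{\Gamma\otimes\mathcal{D}_C}Z|_{\Gamma\otimes\mathcal{D}_{E'}}=M_{C,E}M_{E,E'}$ and the comparison with $\tilde{W}$ via uniqueness of the minimal isometric dilation, while the paper additionally records an orthogonality computation showing $Z(I_{\Gamma}\otimes\sigma^{-1})(e_0\otimes\mathcal{D}_{\gamma})\perp e_0\otimes\mathcal{D}_C$, which is not needed for the identity itself.
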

\begin{proof}
From the definition of $\mathcal{H}_{\tilde{E}}$, it is easy to see that $\mathcal{H}_C\subseteq\mathcal{H}_{\tilde{E}}$ and $\underline{\tilde{E}}$ is a lifting of $\underline{C}$. Also,
\[\overline{span}\{\tilde{E}_{\alpha}x: x\in\clh_C\} = \overline{span}\{{E'_{\alpha}}x: x\in\clh_C\} = \clh_{\tilde{E}}.\]
Thus $\tilde{\underline{E}}$ is a minimal contractive lifting of $\underline{C}$.

By Definition \ref{def 2.4}, we know that the characteristic functions $M_{C,E}$ and $M_{E,E'}$ are given by
\[
M_{C,E} = P_{\Gamma\otimes\cld_C} W_1|_{\Gamma\otimes\cld_{E}}\text{ and }
M_{E,E'} = P_{\Gamma\otimes\cld_E} W_2|_{\Gamma\otimes\cld_{E'}},
\]
where
$W_1 : \hat{\clh}_{E}\rightarrow\hat{\clh}_{C}\oplus \clk_1$ and $W_2 :\hat{\clh}_{E'}\rightarrow\hat{\clh}_{E}\oplus \clk_2$ are unitary operators, as in Equation (\ref{char uni}), satisfying
\begin{align}\label{eqn7}
	W_1{V_i}^{E} = (V^{C}\oplus Y_1)_i W_1,\, W_1|_{\clh_C} = I_{\mathcal{H}_C} \text{ and }W_2{V_i}^{E'} = (V^{E}\oplus Y_2)_i W_2,\, W_2|_{\clh_E} = I_{\mathcal{H}_E},
\end{align}
 for $i=1,2,\ldots,d.$

Now, we define $Z := (W_1\oplus I_{\clk_2})W_2:{\hat{\clh}_{E'}}\rightarrow\hat{\clh}_C\oplus \clk_1\oplus \clk_2$. Note that $Z$ is a unitary operator, as it is a composition of two unitary operators. By Equation (\ref{eqn7}), we have
\[Z|_{\clh_C} = I_{\mathcal{H}_C}\text{ and }
Z{V_i}^{E'} = (V^C\oplus Y_1\oplus Y_2)_{i}Z\;\;\text{for all} \; i = 1,2,..,d.
\]
It is easy to see that
\begin{equation}\label{eqn8}
	M_{C,E}M_{E,E'} = P_{\Gamma\otimes\cld_C}Z|_{\Gamma\otimes\cld_{E'}}.
\end{equation}

Let the unitary 
$\sigma: \cld_{E'} \rightarrow \cld_{\tilde{E}}\oplus\cld_{\gamma}$ be as defined in Equation (\ref{sigma}).
Thus $Z(\Gamma\otimes \cld_{E'}) = (W_1\otimes I_{\clk_2})W_2(I_{\Gamma}\otimes\sigma^{-1})((\Gamma\otimes \cld_{\tilde{E}})\oplus(\Gamma\otimes \cld_{\gamma})).$

We claim that $P_{\Gamma\otimes\cld_C}Z(\Gamma\otimes\cld_{E'}) 
= P_{\Gamma\otimes\cld_C}Z(I_{\Gamma}\otimes\sigma^{-1})(\Gamma\otimes\cld_{\tilde{E}})$. To verify the claim, it is enough to show that $Z(I_{\Gamma}\otimes\sigma^{-1})(e_0\otimes \cld_{\gamma})$ is orthogonal to $e_0\otimes \cld_C$. For $h_{A}\in\clh_{A}$ and $h_{C}\in\clh_{C}$,
\begin{align*}
	&\langle Z(I_{\Gamma}\otimes\sigma^{-1})(e_0\otimes D_{\gamma}h_A), (e_0\otimes (D_C)_i h_C)\rangle\\ 
	&= \langle W_2(I_{\Gamma}\otimes\sigma^{-1})(e_0\otimes D_{\gamma}h_A), W_1^{*}(e_0\otimes (D_{C})_ih_C)\rangle\\
	&= \langle W_2(I_{\Gamma}\otimes\sigma^{-1})(e_0\otimes D_{\gamma}h_A), W_1^{*}({V_i}^C h_C - C_i h_C)\rangle\\
	&= \langle W_2(I_{\Gamma}\otimes\sigma^{-1})(e_0\otimes D_{\gamma}h_A), {V_i}^E W_1^{*}h_C - W_1^{*}C_i h_C)\rangle\\
	&= \langle W_2(I_{\Gamma}\otimes\sigma^{-1})(e_0\otimes D_{\gamma}h_A), {V_i}^E h_C - C_i h_C\rangle\\
	&= \langle (I_{\Gamma}\otimes\sigma^{-1})(e_0\otimes D_{\gamma}h_A), W_2^{*}({V_i}^E h_C - C_i h_C)\rangle\\
	&= \langle (I_{\Gamma}\otimes\sigma^{-1})(e_0\otimes D_{\gamma}h_A), {V_i}^{E'} W_2^{*}h_C - W_2^{*}C_i h_C)\rangle\\
	&= \langle (I_{\Gamma}\otimes\sigma^{-1})(e_0\otimes D_{\gamma}h_A), {V_i}^{E'} h_C - {C_i} h_C)\rangle\\
	&= \langle e_0\otimes \sigma^{-1} D_{\gamma}h_A, {E_i}'h_C - C_i h_C + e_0\otimes (D_{E'})_ih_C\rangle\\
	&= \langle D_{\gamma}h_A, \sigma (D_{E'})_ih_C\rangle\\
	&= \langle D_{\gamma}h_A, (D_{\tilde{E}})_ih_C\rangle= 0.
\end{align*}
Consider the unitary operator $Z_1: \hat{\clh}_{\tilde{E}}\rightarrow\hat{\clh}_{C}\oplus \clk_1\oplus \clk_2$ defined by 
\[
Z_1 := Z(I_{\mathcal{H}_{\tilde{E}}}\oplus (I_{\Gamma}\otimes\sigma^{-1}|_{\cld_{\tilde{E}}})).
\]  
For $h_{\tilde{E}} \oplus \sum_{\alpha} e_{\alpha} \otimes D_{\tilde{E}}h_{\alpha}\in\hat{H}_{\tilde{E}}$, we have
\begin{align*}
	Z_1 {V_i}^{\tilde{E}} \left(h_{\tilde{E}} \oplus \sum_{\alpha} e_{\alpha} \otimes D_{\tilde{E}}h_{\alpha}\right)
	&= Z_1\left(\tilde{E}_i h_{\tilde{E}} \oplus [e_0\otimes D_{\tilde{E}i}h_{\tilde{E}} + e_i\otimes\sum_{\alpha}e_{\alpha}\otimes D_{\tilde{E}}h_{\alpha}]\right)\\
	&= Z(I_{\clh_{\tilde{E}}} \oplus (I_{\Gamma}\otimes\sigma^{-1}))\left(\tilde{E}_i h_{\tilde{E}} \oplus [e_0\otimes D_{\tilde{E}i}h_{\tilde{E}} + e_i\otimes\sum_{\alpha}e_{\alpha}\otimes D_{\tilde{E}}h_{\alpha}]\right)\\
	&= Z\left(E'_i h_{\tilde{E}} \oplus[ e_0\otimes D_{E'i}h_{\tilde{E}} + e_i\otimes\sum_{\alpha}e_{\alpha}\otimes D_{E'}h_{\alpha}]\right)\\
	&= Z{V_i}^{E'}\left(h_{\tilde{E}} \oplus \sum_{\alpha}e_{\alpha}\otimes D_{E'}h_{\alpha}\right)\\
	&= (V^C\oplus Y_1\oplus Y_2)_i Z\left(h_{\tilde{E}} \oplus \sum_{\alpha}e_{\alpha}\otimes D_{E'}h_{\alpha}\right)\\
	&= (V^C\oplus Y_1\oplus Y_2)_i Z\left(I_{\mathcal{H}_{\tilde{E}}}\oplus \left(I_{\Gamma}\otimes\sigma^{-1}\right)\right)\left(h_{\tilde{E}} \oplus \sum_{\alpha}e_{\alpha}\otimes D_{\tilde{E}}h_{\alpha}\right)\\
	&= (V^C\oplus Y_1\oplus Y_2)_i Z_1 \left(h_{\tilde{E}} \oplus \sum_{\alpha}e_{\alpha}\otimes D_{\tilde{E}}h_{\alpha}\right).     
\end{align*}
Thus we obtained
\[
Z_{1}{V_i}^{\tilde{E}} = (V^C\oplus Y_1\oplus Y_2)_i Z_1\;\;\text{for all}\;i = 1,2,...,d
\]
Also, we have $Z_1|_{\mathcal{H}_C}=I$. By definition \ref{def 2.4}, the characteristic function for the minimal lifting $\underline{\tilde{E}}$ of $\underline{C}$ is 
\[
M_{C,\tilde{E}} = P_{\Gamma\otimes\cld_C}Z_1|_{\Gamma\otimes\cld_{\tilde{E}}}.
\] 
From Equation (\ref{eqn8}), we have
\begin{align*}
	M_{C,\tilde{E}} &= P_{\Gamma\otimes\cld_C}Z(I_{\Gamma}\otimes\sigma^{-1}|_{\cld_{\tilde{E}}})\\
	&= P_{\Gamma\otimes\cld_C}Z|_{\Gamma\otimes\cld_{E'}} (I_{\Gamma}\otimes\sigma^{-1}|_{\cld_{\tilde{E}}}) \\
	& = M_{C,E}M_{E,E'} (I_{\Gamma}\otimes\sigma^{-1}|_{\cld_{\tilde{E}}}).
\end{align*}
This completes the proof.
\end{proof}
The following example validates the Theorem \ref{theorem product of characteristic function}.
\begin{example}
	Let $C,\, A,\,A' = 0$ be zero operators on $\clh_C,\,\clh_A,\,\clh_{A'}=\mathbb{C}$, respectively. Consider the lifting $E$ of $C$ and $E'$ of $E$ defined by
	\[E = \begin{bmatrix}
	C&0\\B&A
	\end{bmatrix} = \frac{1}{\sqrt{2}}\begin{bmatrix}
	0 & 0\\
	1 & 0
	\end{bmatrix}\text{ and } E'= \begin{bmatrix}
	E&0\\
	B'&A'
	\end{bmatrix}
	= \frac{1}{\sqrt{2}}\begin{bmatrix}
	0 & 0 & 0 \\
	1 & 0 & 0\\
	1 & 0 & 0 
	\end{bmatrix}\] on $\clh_E=\mathbb{C}^2$ and $\clh_{E'}=\mathbb{C}^3$, respectively.  
	Then $E'$ is also a lifting for $C$ and
	\[
	E' = \begin{bmatrix}
	C&0\\
	B'_C&A'_C
	\end{bmatrix},
	\text{
		where }B'_C = \begin{bmatrix}
	\frac{1}{\sqrt{2}}\\
	\frac{1}{\sqrt{2}}
	\end{bmatrix}\text{ and }
	A'_C = \begin{bmatrix}
	0 & 0\\
	0 & 0
	\end{bmatrix}.\] It is easy to see that $E$ is a minimal contractive lifting of $C$ and $E'$ is a minimal contractive lifting of $E$. We know
	\[
	\overline{\text{span}}\{{E'}^nx: x\in\clh_{C} \;\text{and}\;n\geq0\} = \text{span}\left\{(1,0,0),\left(0,\frac{1}{\sqrt{2}},\frac{1}{\sqrt{2}}\right)\right\}\neq\clh_{E'}.	
	\]
	Thus $E'$ is not a minimal contractive lifting of $C$. 
	
	Also note that   $D_C = I_{\mathbb{C}}$, $D_{*, A} = I_{\mathbb{C}}$, $D_{*, A'}=I_{\mathbb{C}}$ and 
	\[
	{D_E} = \sqrt{I_{\clh_E} -\underline{E}^{\ast}\underline{E}}\\
	= \begin{bmatrix}
	\frac{1}{\sqrt{2}} & 0\\
	0 & 1
	\end{bmatrix},\,D_{E'} = \sqrt{I_{\clh_{E'}} - E'^{\ast}E'}
	= \begin{bmatrix}
	0 & 0 & 0\\
	0 & 1 & 0\\
	0 & 0 & 1
	\end{bmatrix}.
	\] 
	We have  $B^* = {D_C}\gamma D_{*, A}$ and ${B'}^{\ast} = {D_E}\gamma' D_{*, A'}$, which implies $\gamma = \frac{1}{\sqrt{2}}$ and $\gamma' = \begin{bmatrix}
	1 \\ 0
	\end{bmatrix}$. 
	
	By Equations (\ref{eqn dey characteristic funtion HC}) and (\ref{eqn dey characteristic function HA}), we have the following for $h_C\in\clh_C$ and $h_A\in\clh_A$:
	\begin{align*}
	\theta_{C,E}(D_E)h_C = e_{0}\otimes\frac{h_C}{2}\text{ and }
	\theta_{C,E}(D_E)h_A = e_1\otimes\frac{h_A}{\sqrt{2}}.
	\end{align*}
	Similarly for $h_E = \begin{bmatrix}
	h_{C}\\ h_{A}
	\end{bmatrix} \in \clh_E$ and $h_{A'}\in\clh_{A'}$, we have
	\begin{align*}
	\theta_{E,E'}(D_{E'})h_E = e_{0}\otimes
	\begin{bmatrix}
	0\\ h_{A}
	\end{bmatrix}\text{ and }
	\theta_{E,E'}(D_{E'})h_{A'} = e_1\otimes\begin{bmatrix}
	h_{A'} \\ 0
	\end{bmatrix}.
	\end{align*}  
	As we have already mentioned, $E'$ is a contractive lifting for $C$, thus  
	$({B'_C})^{*} = D_C\gamma'_C D_{*, A_C}$. This implies $\gamma'_C =  \begin{bmatrix}
	\frac{1}{\sqrt{2}} & \frac{1}{\sqrt{2}}
	\end{bmatrix}$ and 
	\begin{align*}
	\theta_{C,E'}(D_{E'})h_C &= 0,\text{ for }h_C\in\clh_C,\\
	\theta_{C,E'}(D_{E'})h_{{A'_C}} &= e_1\otimes\frac{1}{\sqrt{2}}(h_A + h_{A'})\text{ for }h_{{A'_C}} = \begin{bmatrix}
	h_A\\h_{A'}
	\end{bmatrix}\in\clh_{A'_C}.
	\end{align*}
	We can also represent $\theta_{C,E}$, $\theta_{E,E'}$ and $\theta_{C,E'}$ in the following matrix form:
	\[
	\theta_{C,E}(z) = \begin{bmatrix}
	\frac{1}{\sqrt{2}} & \frac{z}{\sqrt{2}}
	\end{bmatrix},\,
	\theta_{E,E'}(z) = \begin{bmatrix}
	0 & 0 & z\\
	0 & 1 & 0
	\end{bmatrix},\,
	\theta_{C,E}\theta_{E,E'}(z) = \begin{bmatrix}
	0 & \frac{z}{\sqrt{2}} & \frac{z}{\sqrt{2}}
	\end{bmatrix}.
	\]
	{Now consider,	\[\clh_{\tilde{E}} = 
		\overline{\text{span}}\{{E'}^nx: x\in\clh_{C} \;\text{and}\;n\geq0\} = \text{span}\left\{(1,0,0),\left(0,\frac{1}{\sqrt{2}},\frac{1}{\sqrt{2}}\right)\right\}.	
		\]
		Then $\clh_{\tilde{E}} = \clh_{C}\oplus\clh_{\tilde{A}}$ where $\clh_{\tilde{A}} =\text{span }\left\{(0,\frac{1}{\sqrt{2}},\frac{1}{\sqrt{2}})\right\}.$ To write $E'$ in terms of the basis $$\{(1,0,0)(0,\frac{1}{\sqrt{2}},\frac{1}{\sqrt{2}})(0,- \frac{1}{\sqrt{2}},\frac{1}{\sqrt{2}})\},$$
	consider the unitary operator
		\[
	P = 
	\begin{bmatrix}
	1&0& 0\\
	0 & \frac{1}{\sqrt{2}} & - \frac{1}{\sqrt{2}}\\
	0 & \frac{1}{\sqrt{2}} & \frac{1}{\sqrt{2}} 
	\end{bmatrix},
	\]
	and obtain 
	\[
	P^{-1} E' P = 
	\begin{bmatrix}
	0 & 0 & 0\\
	1 & 0 & 0\\
	0 & 0 & 0 
	\end{bmatrix}
	\]
	}
	Since $\clh_{\tilde{E}}$ is a reducing subspace of $\clh_{E'}$ w.r.t. $\underline{E'},$ we identify 
	\begin{align*}
	\tilde{E} = E'|_{\clh_{\tilde{E}}} =
	\begin{bmatrix}
	C&0\\\tilde{B}&\tilde{A}
	\end{bmatrix}
	\end{align*}
	with
	$\begin{bmatrix}
	0&0\\
	1&0
	\end{bmatrix}.$
	We have $D_{*,\tilde{A}} = I_{\mathbb{C}},$ 
	\[ D_{\tilde{E}}=\begin{bmatrix}
	0&0\\
	0&1
	\end{bmatrix} \mbox{~and~} \cld_{\tilde{E}}=\{(0,k): k \in \mathbb{C}\}.\]
	Since $\tilde{B}^*=D_C\tilde{\gamma}D_{*,\tilde{A}}$, that implies $\tilde{\gamma} = 1$. Again using Equations (\ref{eqn dey characteristic funtion HC}) and (\ref{eqn characteristic for HA}), we get
	
	\begin{align*}
	\theta_{C,\tilde{E}}(D_{\tilde{E}})h_C &= 0,\text{ and }
	\theta_{C,\tilde{E}}(D_{\tilde{E}})h_{\tilde{A}} = e_1\otimes h_{\tilde{A}} \text{ for } h_C\in\mathcal{H}_C,\,h_{\tilde{A}}\in\mathcal{H}_{\tilde{A}}.
	\end{align*}
	In other words,
	\begin{align*}
	\theta_{C,\tilde{E}}(z) &= \begin{bmatrix}
	0 & z
	\end{bmatrix}.
	\end{align*}
	We also have,
	\begin{align*}
	\theta_{C,E}\theta_{E,E'} \begin{bmatrix}
	1&0& 0\\
	0 & \frac{1}{\sqrt{2}}  & - \frac{1}{\sqrt{2}}\\
	0 & \frac{1}{\sqrt{2}} & \frac{1}{\sqrt{2}} 
	\end{bmatrix}|_{\cld_{\tilde{E}}}   &= \begin{bmatrix}
	0 & z
	\end{bmatrix}. 
	\end{align*}
		Thus $$\theta_{C,\tilde{E}} = \theta_{C,E}\theta_{E,E'}\sigma^{-1}|_{\cld_{\tilde{E}}}, $$
	where $\sigma^{-1}= P$.
\end{example}

\section*{Acknowledgment}
	The first author was supported by the Department of Mathematics, IIT Bombay, India. The second author was supported by SERB MATRICS Grant number MTR/2018/000343.

\end{document}